\documentclass[12pt]{amsart}

\usepackage{amsfonts}
\usepackage{amssymb}
\usepackage[letterpaper, left=2.5cm, right=2.5cm, top=2.5cm,
bottom=2.5cm,dvips]{geometry}
\usepackage{verbatim}
\usepackage[T1]{fontenc}
\usepackage{graphicx}
\usepackage{amsmath}

\usepackage{dsfont}

\usepackage{algorithm}
\usepackage[noend]{algpseudocode}

\usepackage[utf8]{inputenc}

\usepackage{bbm}
\usepackage{enumitem}

\newtheorem{theorem}{Theorem}
\theoremstyle{plain}

\newtheorem{conjecture}{Conjecture}

\newtheorem{lemma}{Lemma}

\newtheorem{proposition}{Proposition}

\numberwithin{equation}{section}

\usepackage{array,multirow}

\usepackage{xcolor}

\usepackage[backref]{hyperref}
\hypersetup{
	colorlinks,
	linkcolor={blue!60!black},
	citecolor={red!60!black},
	urlcolor={red!60!black}
}

\usepackage{tikz}

\begin{document}
	\title{Packing arithmetic progressions}
	
	\author{Noga Alon}
	\address{Department of Mathematics, Princeton University, Princeton, NJ 08544, USA and Schools of Mathematics and Computer Science, Tel Aviv University, Tel Aviv 69978, Israel}
	\email{nalon@math.princeton.edu}
	\thanks{The research of the first author was supported in part by NSF grant DMS-2154082}
	
	\author{Micha{\l} D\k ebski}
	\address{Faculty of Mathematics and Information Science, Warsaw University
		of Technology, 00-662 Warsaw, Poland}
	\email{michal.debski87@gmail.com}
	
	\author{Jaros\l aw Grytczuk}
	\address{Faculty of Mathematics and Information Science, Warsaw University
		of Technology, 00-662 Warsaw, Poland}
	\email{jaroslaw.grytczuk@pw.edu.pl}
	\thanks{The third author was supported in part by Narodowe Centrum Nauki, grant 2020/37/B/ST1/03298.}
		
	\author{Jakub Przyby\l o}
	\address{AGH University of Krakow, al. A. Mickiewicza 30, 30-059 Krakow, Poland}
	\email{jakubprz@agh.edu.pl}
	\thanks{The fourth author was supported in part by the AGH University of Krakow, grant no. 16.16.420.054, funded by the Polish Ministry of Science and Higher Education.}

\begin{abstract}
	Let $\mathcal{F}=\{A_1,A_2,\ldots,A_k\}$ be a collection of finite arithmetic progressions, where each $A_d$ is an initial segment of the set $D_d=\{d,2d,3d,\ldots\}$ of consecutive multiples of a positive integer $d$. Let $m(\mathcal{F})$ denote the minimum length of an interval containing pairwise disjoint \emph{shifted} copies of all members of the family $\mathcal{F}$. 
	We study this parameter in the following two cases: for a fixed positive integer $n$, (1) each progression in $\mathcal{F}$ has the form $A_d=D_d\cap\{1,2,\ldots,n\}$, and (2) all progressions $A_d$ of $\mathcal{F}$ have the same size $n$, that is, $A_d=D_d\cap \{1,2,\ldots, nd\}$. We in particular derive the following asymptotic estimates. In case (1), when $k=n$, we get $m(\mathcal{F})=\Theta(n^{3/2}/\ln n)$. In case (2), when $k=n$, we get $m(\mathcal{F})=\Theta(n^3/\ln n)$, while if $k>k_0(n)$, then $m(\mathcal{F}) < 3kn$. In both cases we additionally determine $m(\mathcal{F})$ asymptotically or settle its order of magnitude for all $k<n$.
\end{abstract}

	\maketitle

	\section{Introduction}
For any positive integer $m\in \mathbb{N}$ 
put $[m]=\{1,2,\ldots,m\}$. Let $D_d=\{d,2d,3d,\ldots\}$ denote the set of all positive multiples of a number $d\in \mathbb{N}$. For a given subset $A\subseteq \mathbb{N}$, its \emph{shifted} copy 
is any set of the form $s+A=\{s+a:a\in A\}$, where $s$ is an arbitrary integer. We will use the term \emph{shift} to refer to both the set $s+A$ and the number $s$.

Let $\mathcal{F}=\{A_1,A_2,\ldots,A_k\}$ be a collection of finite subsets of $\mathbb{N}$. We say that $\mathcal{F}$ can be \emph{packed} into $[m]$ if there exist integers $s_i$ such that:
\begin{enumerate}
	\item [(1)]$s_i+A_i\subseteq [m]$, for each $i \in [k]$,
	\item [(2)] $(s_i+A_i)\cap (s_j+A_j)=\emptyset$, for all $i,j \in [k]$, $i\neq j$.
\end{enumerate}

Let $m(\mathcal{F})$ denote the least possible $m\in \mathbb{N}$ such that $\mathcal{F}$ can be packed into $[m]$. Our goal is to estimate the parameter $m(\mathcal{F})$ for families consisting of 
arithmetic progressions whose differences are pairwise distinct and form the set $[k]$.
Obviously, the problem makes sense only subject to imposing restrictions 
to the lengths of the progressions. We consider two natural variants here. 

Firstly, we investigate the bounded \emph{diameter} setting, that is for a fixed $n\in \mathbb{N}$, we consider families $\mathcal{F}=\{A_1,A_2,\ldots,A_k\}$, 
of maximal initial segments of the sets $D_d$ entirely contained in $[n]$.
More formally, each member of the family has the form $A_d=D_d\cap [n]=\{id:1\leqslant i\leqslant \lfloor n/d\rfloor\}$. For any subset $D\subseteq [n]$, let us denote $\mathcal{F}_D=\{A_d:d\in D\}$, and let $m_D(n)=m(\mathcal{F}_D)$. If $D=[k]$, with $k\leqslant n$, then we simply write $m_k(n)=m(\mathcal{F}_{[k]})$ and $m(n)=m_n(n)$. Notice that for $k>n$, the paramater $m_k(n)$ is not defined.

In our first result we determine the asymptotic order of the function $m(n)$.

\begin{theorem} \label{t11}
	We have $m(n)=\Theta(n^{3/2}/\ln n)$. More precisely,
		$$\left(\frac43-o(1)\right) \frac{n^{3/2}}{\ln n} \leqslant  m(n) \leqslant \left(\frac53+o(1)\right)
		\frac{n^{3/2}}{\ln n}.$$
\end{theorem}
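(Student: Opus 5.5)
The plan is to prove the two bounds separately. In both, the driving observation is a Chinese Remainder Theorem fact: if $p\neq q$ are primes and the shifted copies $s+A_p$, $t+A_q$ have spans overlapping in an interval of length at least $pq$, then the two copies intersect.

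\emph{Lower bound.} Consider only the progressions $A_p$ with $p$ prime and $p\leqslant c\sqrt n$, where $c$ is a parameter to be optimized at the end. Each such copy has span of length $n-O(p)$. There are $r=\pi(c\sqrt n)\sim 2c\sqrt n/\ln n$ of them, and since $pq\leqslant c^2 n$, the CRT fact makes every pairwise overlap of spans shorter than $pq$.
\begin{itemize}
\item Order the copies by starting point, with primes $p_1,\dots,p_r$ in that order.
\item Consecutive starts then differ by at least $n-p_ip_{i+1}-O(\sqrt n)$.
\item Hence
$$m(n)\geqslant \sum_{i<r}\bigl(n-p_ip_{i+1}\bigr)-o\!\left(\frac{n^{3/2}}{\ln n}\right).$$
\end{itemize}
It remains to minimize $\sum p_ip_{i+1}$ over all orderings of the primes up to $c\sqrt n$, which I expect is achieved by interleaving small and large primes. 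I would evaluate this minimum with the prime number theorem, treating the primes as having density $1/\ln\sqrt n$, which reduces it to a one-variable integral in $c$. Optimizing over $c$ should then give the constant $\frac43$. If this crude ordering argument loses a constant, the fallback is to count, for each point $x$, how many spans cover $x$ and to use that at most $O(1)$ spans of primes of comparable size can share long stretches.

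\emph{Upper bound.} I would build an explicit packing in three tiers.
\begin{itemize}
\item \emph{Large primes.} The $A_p$ with $p$ a prime in $(c'\sqrt n,n]$ are sparse. They can overlap heavily, because two such copies need overlap $pq>c'^2n$ to be forced to collide. I would place them in overlapping chains, so that each costs only an amortized fraction of $n$ of length.
\item \emph{Smooth differences.} The $A_d$ with $d$ having a prime factor $p\leqslant\sqrt n$ are nested inside residue classes. For a fixed $p$, all progressions whose difference is a multiple of $p$ live in classes mod $p$, and the densities $1/(pj)$ summed over $j$ up to $n/p$ are about $(\ln n)/p$. A slowly growing family of such progressions can therefore share one window of length $n+O(\sqrt n)$ by choosing distinct residues. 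The small-$d$ progressions themselves, $A_1$ being a full interval of length $n$, take only a bounded number of such windows.
\item \emph{Leftovers.} Every remaining $A_d$ with $d$ large is inserted greedily into the gaps. The number of forbidden shifts for $A_d$ is at most $|A_d|$ times the number of occupied points, namely
$$\frac{n}{d}\cdot O\!\left(\frac{n^{3/2}}{\ln n}\right),$$
and the number of available shifts is comparable. So a counting argument, or a random-shift argument, shows that a free shift exists once $d$ is large enough. The total density bound $\sum_d n/d=O(n\ln n)=o(n^{3/2}/\ln n)$ confirms that there is room.
\end{itemize}
Balancing the total length of the prime chains against the windows should give $(\frac53+o(1))\,n^{3/2}/\ln n$.

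The main obstacle I anticipate is making the two constants match the statement. For the lower bound this means the exact minimization of $\sum p_ip_{i+1}$ together with the choice of cutoff $c$. For the upper bound it means designing the overlapping chains of prime progressions so that the amortized cost per prime up to about $\sqrt n$ matches the stated constant. I would first settle the order $\Theta(n^{3/2}/\ln n)$ with crude constants, then refine the prime ordering on both sides.
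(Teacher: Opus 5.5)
\textbf{Lower bound.} Your setup matches the paper's: order the copies by starting point, and use the CRT to cap the overlap of consecutive spans by $p_ip_{i+1}$. The gap is that you optimize in the wrong direction. The ordering is chosen by the packing, not by you, so the bound valid for every packing is $\min_\pi\sum_i\bigl(n-p_{\pi(i)}p_{\pi(i+1)}\bigr)$. That requires the \emph{maximum} of $\sum_i p_{\pi(i)}p_{\pi(i+1)}$ over orderings. Interleaving small and large primes makes the total overlap small, which is the packer's worst choice, so it gives no lower bound. Carried out as you describe, interleaving gives $\sum\approx\frac{c^3}{3}\cdot\frac{n^{3/2}}{\ln n}$. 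Optimizing $2c-\frac{c^3}{3}$ would then return $c=\sqrt2$ and the constant $\frac{4\sqrt2}{3}>\frac53$, which contradicts the upper bound. The fix is one line. Since $p_ip_{i+1}\leqslant\frac12(p_i^2+p_{i+1}^2)$, for every ordering the sum is at most $\sum_{p\leqslant c\sqrt n}p^2\sim\frac{2c^3}{3}\cdot\frac{n^{3/2}}{\ln n}$. The resulting constant $2c-\frac{2c^3}{3}$ is maximized at $c=1$, giving $\frac43$.

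\textbf{Upper bound.} Here the step that produces $\frac53$ is missing, and your ``overlapping chains'' are aimed at the wrong primes. Primes above $\sqrt n$ cost nothing. Since $pq>n$, all $A_p$ with $p>\sqrt n$ fit simultaneously in one window $[n]$ at their natural positions, the multiples of $p$. By contrast, a fixed fraction of $n$ for each of the $\sim n/\ln n$ large primes would already cost $\Theta(n^2/\ln n)$. The main term comes from primes $p\in(\sqrt n/\ln n,\sqrt n]$. Without overlaps, each occupies its own window of length $n$, shared with its multiples $A_{pj}$ for $pj\leqslant s=\sqrt n(\ln n)^3$, and this gives $2\,\frac{n^{3/2}}{\ln n}$. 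Getting below $2$ requires overlapping the windows of consecutive primes $p<q$ by $(1-o(1))pq$ \emph{while keeping room for the $r=(\ln n)^4$ multiples on each side}. The paper does this with the residues $-i'q\bmod p$ and $-j'p\bmod q$ ($i',j'\in[r]$): no class of the first kind meets a class of the second kind on a common stretch of length $(p-2r)q$. It overlaps only disjoint pairs of consecutive primes, which saves $\frac12\sum_{p\leqslant\sqrt n}p^2\sim\frac13\cdot\frac{n^{3/2}}{\ln n}$.

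Your window tier also needs an assignment rule. With $p$ progressions per window (distinct residues), a composite $d\leqslant s$ must be charged to a prime factor $p\geqslant(\ln n)^5$. Only then do the windows beyond one per prime cost $O\bigl(ns\sum_{p\geqslant(\ln n)^5}p^{-2}\bigr)=o\bigl(n^{3/2}/\ln n\bigr)$; charging the even $d$ to $p=2$ alone would cost about $s/4$ windows. The leftover $(\ln n)^5$-smooth $d\leqslant s$ must then be shown to be few, which is where the Canfield--Erd\H{o}s--Pomerance estimate enters. Finally, in the greedy step the number of forbidden shifts is $|A_d|$ times the $O(n\ln n)$ occupied points, not times the interval length. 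Only this count yields the threshold $d\gg\sqrt n(\ln n)^2$ that lets the windows stop at $s$.
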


It would be nice to close the gap between these bounds, and we suspect that the constant $4/3$ is closer to the truth. Actually, the above lower bound holds even if we restrict the differences of arithmetic progressions to prime numbers not exceeding $\sqrt{n}$. On the other hand, if $k\leqslant \sqrt{n}$, then $m_k(n)\leqslant (1.52+o(1))n^{3/2}/\ln n$, while for $k\ll\sqrt{n}$, we have $m_k(n) = (1+o(1))kn/\ln k$, which is a certain indication in favor of the supposition (see Theorem \ref{t31}).

Secondly, we consider similar collections of initial segments of sets of multiples $D_d$ having the same \emph{size} $n$. More precisely, we investigate the packing problem for families $\mathcal{G}=\{B_1,B_2,\ldots, B_k\}$, where $B_d=D_d\cap [nd]=\{id:1\leqslant i\leqslant n\}$, with $d\in [k]$. As before, for any subset $D$ of positive integers, we denote $\mathcal{G}_D=\{B_d:d\in D\}$. Also, the corresponding parameters are defined in the same way, and we denote them analogously by $M_D(n)=m(\mathcal{G}_{D})$, $M_k(n)=m(\mathcal{G}_{[k]})$, and $M(n)=m(\mathcal{G}_{[n]})$.

Our next result determines the asymptotic order of the function $M(n)$.

\begin{theorem} \label{t12}
	We have $M(n)=\Theta(n^3/\ln n)$. More precisely,
	$$\left(\frac16-o(1)\right) \frac{n^{3}}{\ln n} \leqslant  M(n) \leqslant \left(0.526+o(1)\right)
	\frac{n^{3}}{\ln n}.$$
\end{theorem}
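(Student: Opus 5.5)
The plan is to prove the two bounds separately: an overlap argument over large primes for the lower bound, and an explicit packing for the upper bound.

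\textbf{Lower bound.} Only the progressions $B_p$ with $p$ prime are used. For a prime $p\leqslant n$ let $I_p$ be the convex hull of the placed copy $s_p+B_p$; it is an interval of length $(n-1)p$ and contains exactly one element of each residue class mod $p$ in every window of length $p$. The key observation is a Chinese Remainder argument. Suppose $p\neq q$ are primes and $I_p\cap I_q$ contains an interval $J$ of length at least $pq$. Then $J$ contains an integer $x$ with $x\equiv s_p \pmod p$ and $x\equiv s_q \pmod q$, and this $x$ lies in both copies, contradicting disjointness. Hence $|I_p\cap I_q|<pq$ for all distinct primes $p,q$.

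The rest is a counting lemma for families of intervals with large lengths and small pairwise overlaps. I would order the intervals $I_p$ by left endpoint and estimate the length of their union greedily. Each new interval $I_p$ can overlap the union of the earlier ones only through the (at most) one earlier interval reaching farthest to the right, or through a bounded number of them. This bounds the loss at each step by something like $pq$ with $q<p$, or better by $\min(pq,(n-1)\min(p,q))$. Summing over primes $p\leqslant n$, with $\sum_{p\leqslant n}p\sim n^2/(2\ln n)$, the total length is $\sim n^3/(2\ln n)$. I expect the overlap losses to cost a constant factor, leaving $(\frac16-o(1))n^3/\ln n$. The $1/6$ should come out of an optimization over which range of primes, say $p\in(\alpha n,n]$, to keep, together with an integral such as $\int(x-\text{overlap})\,dx/\ln n$.

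This step is the main obstacle. The naive union bound fails because one long interval may overlap many others by up to $pq$ each. I would therefore first try restricting to primes in a window $(\alpha n,n]$, where all lengths are comparable. Then any point is covered by few intervals: if $t$ intervals share a common point, their pairwise overlaps force a spread of size $\gtrsim t\cdot(n-\alpha n)p$. This density bound would then be integrated over the line.

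\textbf{Upper bound.} I would give an explicit packing built from two observations. First, the progressions with composite or small differences can be packed almost for free inside structured blocks. If $d=ab$, then $B_d$ is a copy of $B_a$ dilated by $b$. So several progressions whose differences share a common factor $b$ can be interleaved in distinct residue classes mod $b$ of a single block of length about $n\cdot\max d$. Second, the large primes, which must essentially be placed in nearly private intervals by the lower-bound argument, are packed consecutively, with adjacent ones overlapping by up to about $pq$ via CRT-compatible shifts. The primes $p\leqslant n$ are grouped into dyadic or geometric ranges, and within a range consecutive primes $p<q$ are placed so that $I_q$ starts inside $I_p$ with residues chosen to avoid collisions; the total length is then a sum $\sum (n-1)p$ minus the savings. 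All composite $d$ are absorbed into the gaps of the prime progressions, or into residue classes of blocks occupied by prime progressions $B_p$ with $p\mid d$, and each such $d$ needs to be checked to fit, with any leftover handled by a small separate block. Finally I would optimize the overlap parameter numerically to reach the constant $0.526$.
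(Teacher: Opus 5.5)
\textbf{Lower bound.} Your Chinese Remainder observation $|I_p\cap I_q|<pq$ is the right input. However, you stop before the step that finishes the proof, and the obstacle you anticipate does not arise. The missing fact is that no $I_q$ can be nested inside another $I_p$: that would give an overlap of $(n-1)q+1\geqslant pq$ integers whenever $p\leqslant n-1$ (discard $p=n$ if $n$ is prime). So list the primes $p<n$ as $p_{\pi(1)},\dots,p_{\pi(t)}$ in the order of the left endpoints $L_{\pi(i)}$ of their intervals. Then each interval ends strictly to the right of all earlier ones. In your greedy language, the earlier interval reaching farthest right is always the immediate predecessor, so each interval is charged only once, and $L_{\pi(i+1)}-L_{\pi(i)}>(n-1)p_{\pi(i)}-p_{\pi(i)}p_{\pi(i+1)}$. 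Telescope this, and bound $\sum_{i<t}p_{\pi(i)}p_{\pi(i+1)}\leqslant\sum_{p<n}p^2$ via $ab\leqslant(a^2+b^2)/2$ (the paper uses the rearrangement inequality). This gives a length of at least $(n-1)\bigl(\sum_{p<n}p-n\bigr)-\sum_{p<n}p^2=(\frac12-\frac13-o(1))\,n^3/\ln n$ by Lemma~\ref{l21}. No window $(\alpha n,n]$, density argument or optimization is needed: the $\frac16$ is simply $\frac12-\frac13$ over all primes, and your $\alpha$-optimization has its optimum at $\alpha=0$. This is the paper's argument. The paper orders by the shifts $s_p$; ordering by left endpoints, as you suggest, is the cleaner way to make the telescoping rigorous.

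\textbf{Upper bound.} Here the gap is structural, and as written the sketch does not determine any constant, let alone $0.526$. The composites that matter are $d=jp$ with $p>n/\ln n$ prime and $2\leqslant j\leqslant n/p$. They cannot go into residue classes of a block built around $B_p$, because $B_{jp}$ has span $(n-1)jp$, which is $j$ times that of $B_p$. Nor are they a small leftover: their total span is of order $n^3\ln\ln n/\ln n$, exceeding the target.

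The idea you are missing is to build each block around the \emph{longest} member. Split the primes in $(n/\ln n,n]$ into the harmonic (not dyadic) ranges $P_i=(n/(i+1),n/i]$, on which $\lfloor n/p\rfloor=i$. Pack the $B_{ip}$, $p\in P_i$, consecutively with overlaps $\approx ipq$ for consecutive $p<q$. At the same time keep a set $R_q$ of $i$ residues mod $q$ that the preceding copy of $B_{ip}$ avoids from the start of $B_{iq}$ onward; this is the CRT construction of Step~3 in the proof of Theorem~\ref{t11}. Then nest $B_q,\dots,B_{(i-1)q}$ inside the span of $B_{iq}$ in these residue classes. The cost per prime is thus $ipn-ip^2$ rather than $pn-p^2$, and summing over $i$ yields $\zeta(2)\frac{n^3}{2\ln n}-\zeta(3)\frac{n^3}{3\ln n}$. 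For $p>n/2$ the overlap $pq$ would exceed half the span, so it is capped at about $pn/2$. This turns the constant into $\frac{\zeta(2)-1/2}{2}-\frac{\zeta(3)-1}{3}+\frac1{48}\approx0.526$, a closed form rather than the outcome of a numerical optimization.

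For comparison, if the composites really came for free, a prime-only count of your kind (with the same cap) would give $\frac{13}{48}\approx0.271$, which is the formula above with $\zeta(2),\zeta(3)$ replaced by $1$. The difference up to $0.526$ is the price of the multiples $jp$, which your plan leaves unaccounted for. You should also make explicit that the remaining $d$ cost $o(n^3/\ln n)$:
\begin{itemize}
\item the $(\ln n)^2$-smooth ones, by Lemma~\ref{l22};
\item those with a prime factor $p\in((\ln n)^2,\sqrt n]$, grouped $p$ at a time by residues mod $p$;
\item those with a prime factor in $(\sqrt n,n/\ln n]$, using one block of length $n^2$ per prime.
\end{itemize}
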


In section~\ref{SectionMk-small_k} we formulate a more detailed version of the theorem 
with estimates for $M_k(n)$ when $k\leqslant n$ (see Theorem \ref{t41}).

Notice that the parameter $M_k(n)$ is perfectly viable for $k>n$, unlike $m_k(n)$. 
For $k$ much larger than $n$, by using different techniques, 
we get the following result.

\begin{theorem} \label{t13}
	For every $n\in \mathbb{N}$ there exists $k_0=k_0(n)$ such that, for all $k\geqslant k_0$, we have 
$$M_k(n) \leqslant 3nk.$$
\end{theorem}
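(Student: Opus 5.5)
The idea is to place the progressions greedily in \emph{decreasing} order of difference, $d=k,k-1,\ldots,1$, inside the interval $[3nk]$, and to control the placement residue class by residue class. A shifted copy $s+B_d$ is exactly a run of $n$ consecutive elements of the residue class $s \bmod d$. So placing $B_d$ amounts to finding, in some residue class $r \bmod d$ restricted to $[3nk]$, a run of $n$ consecutive free points. Each such class meets $[3nk]$ in about $3nk/d\ge 3n$ points. When $B_d$ is to be placed, the occupied set $T$ has $|T|\le n(k-d)<nk$. Averaging over the $d$ classes, some class has at most $|T|/d< nk/d$ occupied points, which is less than a third of its length. The naive counting of forbidden shifts, at most $n|T|\approx n^2k$, is far too weak, and this residue viewpoint is what should replace it.

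A bound on the \emph{number} of occupied points in a class does not by itself give a free run of length $n$, because the occupied points could be spread out and break every run. The key step is therefore to maintain a structural invariant on $T$ that forces fragmentation to be small. My first attempt is the following. Since the previously placed progressions have differences $d'>d$, each $s'+B_{d'}$ meets a fixed class mod $d$ in the points $s'+id'\equiv r$. These form an arithmetic progression in $i$ with step $d/\gcd(d,d')$, so they are sparse within the class. I would count, for each class $r$, the number of \emph{maximal blocks} of $n$ consecutive class points, say the partition of the class into consecutive windows of $n$ points, that meet $T$. It suffices to find one window untouched by $T$. A single earlier progression $s'+B_{d'}$ has span $nd'$. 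In the class mod $d$ this span covers about $nd'/d$ class points, hence about $d'/d$ windows, and it meets at most $\min(n,\lceil n\gcd(d,d')/d\rceil\cdot\ldots)$ of them. The plan is to sum over all earlier $d'$ and all $r$ to show that the total number of (class, window) pairs hit is less than the total number of windows. That total is about $d\cdot 3nk/(dn)=3k$.

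The crude bound is that every point of $T$ hits at most one window in its own class, so at most $|T|<nk$ windows are hit. That exceeds $3k$, and this is exactly the main obstacle. To overcome it I would exploit the fact that an earlier $B_{d'}$ whose $n$ points fall in the same class mod $d$ must satisfy $d\mid d'$ in the relevant sense. Otherwise its $n$ points are distributed over $d/\gcd(d,d')$ classes, and consecutive hits within a class are at least $d'/\gcd$ apart, so they cluster into few windows per progression only when $\gcd(d,d')$ is large. Summing $\gcd(d,d')$ over $d'\le k$ gives $O(k\log k)$-type quantities rather than $nk$. I expect the resulting bound on hit windows to be of order $k\cdot(\text{divisor sum})/n$ plus lower-order terms, which is below $3k$ once $n$ is fixed and $d$ is large.

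The small differences, $d\le d_1$ for a suitable threshold $d_1=d_1(n)$, will not fit this averaging. I would handle them separately by reserving at the start a short subinterval of length $O_n(1)$, into which $\mathcal G_{[d_1]}$ is packed trivially, for instance side by side using length $\sum_{d\le d_1} nd$. This reserved length is negligible compared with $nk$ when $k\ge k_0(n)$, and choosing $k_0$ large absorbs it and all rounding errors into the slack between the total size $nk$ and the available length $3nk$.
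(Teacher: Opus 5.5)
The step that fails is the bound on the number of touched windows, and it fails for the generic pair $(d,d')$. You estimate that $s'+B_{d'}$ covers about $d'/d$ windows \emph{within one class}, but it meets $\min\{n,d/\gcd(d,d')\}$ classes. If $\gcd(d,d')=1$ and $d>n$, then $id'\equiv jd'\pmod d$ forces $d\mid i-j$. So the $n$ points of $s'+B_{d'}$ lie in $n$ distinct classes, one point each, and this single progression already touches $n$ windows.

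In general $s'+B_{d'}$ touches roughly $\min\{n,(d+d')/\gcd(d,d')\}$ windows, which \emph{decreases} as the gcd grows. The clustering you describe needs $\gcd(d,d')>(d+d')/n$, and this holds for only an $O(n^2/d)$ proportion of the $d'$. So summing $\gcd(d,d')$ is the wrong quantity. Nothing in your argument makes hits of different progressions land in the same windows, so the only available bound on touched windows is of order $n(k-d)$. There are only about $3k$ windows, so the pigeonhole fails for essentially every $d<(1-3/n)k$.

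Sliding runs instead of fixed windows do not help. One occupied point every $n$ steps in each class, about $3k$ points in all, already destroys every run of $n$ consecutive class points, while $|T|\approx n(k-d)$. This is also why knowing that fewer than a third of a class is occupied gives nothing.

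Your greedy allows any valid shift at each stage. You would therefore need a rule for choosing the earlier shifts $s'$ that forces their blocked shifts to overlap massively, and the proposal gives no such rule or invariant. Once $|T|$ is a constant fraction of $nk$, counting over residue classes cannot certify a free shift.

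The paper uses a different tool at exactly this point. It forms the $(n+1)$-uniform, $nk$-regular hypergraph on $[k]\cup\mathbb{Z}_{nk}$ whose edges are $\{d\}$ together with a cyclic shift of $B_d$, and notes that all codegrees are below $n^3$. It then applies the Frankl--R\"odl / Pippenger--Spencer theorem (Lemma~\ref{lfr}) to pack all but $\epsilon k$ progressions disjointly in $\mathbb{Z}_{nk}$, which unrolls into an interval of length at most $2nk$. Only the $\epsilon k$ leftovers are placed greedily, via Lemma~\ref{l23}, in a fresh interval of length about $nk$. There the trivial bound $\epsilon n^2k$ on blocked shifts is affordable, with $\epsilon$ small in terms of $n$, because the occupied set in that interval is tiny. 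Your stage-by-stage greedy never reaches that regime.
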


Let us remark that a similar statement may be far from true for arbitrary families of $n$-element sets (with bounded diameter). Consider, for instance, the set $$S_r=\{1,2,\ldots, r, 2r, 3r, \ldots, (r+1) r\}$$ of size $|S_r|=2r=n$. It is not hard to see that there is no way to pack two shifted copies of $S_r$ so that they overlap. Hence, packing of $k$ copies of $S_r$ requires an interval of length at least $kr(r+1)=\Omega(kn^2)$. Similar examples can be constructed with reference to other results of this paper.

The problem of packing arithmetic progressions with all gaps distinct emerged in connection to certain puzzles having a common flavor of searching for perfect matchings in hypergraphs defined by some distance constraints. One of them is the following problem of \emph{barricade building} posed by Cipra and popularized by Guy \cite{Guy}.

Let $S_n$ denote the set of all permutations of $[n]$. Two permutations, $(a_1,a_2,\ldots,a_n)$ and $(b_1,b_2,\ldots,b_n)$, are \emph{dependent} if $a_1+\cdots+a_i=b_1+\cdots+b_j$, for some $i,j \in [n-1]$. A subset $A\subseteq S_n$ is \emph{independent} if no two of its elements are dependent. What is the maximum size of an independent set in $S_n$? An easy upper bound is $\lfloor n/2\rfloor+1$ and a natural conjecture is that it is attained. In \cite{Debski GPPS-N} we proved that it is at least linear by using initially some packing arguments, like the ones investigated here.

Another, more direct inspiration came from the puzzle of \emph{perfect rhythmic tilings}, invented by Johnson \cite{Johnson} (see also \cite{Amiot} and \cite{Sloane}). The task is to partition a segment $[nk]$ into $k$ arithmetic progressions, each with $n$ terms, but no two with the same gap. Is it true that, for every fixed $n$, such tilings exist for all large enough $k$? The answer is not known even for $n=3$.

Let us finally mention that structures of our interest occur also in the context of some more famous problems. For instance, the \emph{multiplication table} problem of Erd\H{o}s \cite{Erdos} (see \cite{Ford}) asks simply for the cardinality of the union $B_1\cup \cdots\cup B_n$ of arithmetic progressions defined above, while the \emph{Arithmetic Kakeya Conjecture} of Katz and Tao \cite{Katz-Tao} (stated independently by Ruzsa \cite{Ruzsa}), concerns the least possible size of the union $(s_1+B_1)\cup \cdots\cup (s_k+B_k)$ of their shifted copies. Even Riemann Hypothesis can be equivalently formulated by using a (slightly modified) \emph{incidence matrix} of the family $\{A_1,A_2,\ldots, A_n\}$, known as the \emph{Redheffer matrix} \cite{Redheffer}, whose determinant coincides with the partial sum of the M\"{o}bius function (see \cite{Broughan}).

Proofs of the above stated results are presented in the three subsequent sections. The last section contains discussion together with some further simple results, as well as several open problems.

Throughout the paper we use the standard asymptotic notation. In particular, we assume that values of $n$ or $k$ are sufficiently large whenever this is needed. The usual notation $o(1)$ denotes a quantity that tends to $0$ as the relevant parameter tends to infinity. 
Should this not be clear from context we will specify such parameter $p$ by writing $o_p(1)$ instead.
The logarithm in the natural base $e$ is denoted by $\ln x$, and $\log x$ denotes logarithm in base $2$.

\section{Bounds for $m(n)$; Proof of Theorem \ref{t11}}
\subsection{Preliminaries}
We examine packings of integer sets, hence an interval will be identified with integer points, sometimes called \emph{elements}, within it -- the number of these will be called the \emph{length} of an interval.
We will exploit several known estimates and will be mostly focused on asymptotic approximations. For definiteness of notation let us however assume that an interval of length $x$, where $x$ is a real number, is assumed to contain $\lfloor x\rfloor$ (integer) elements, unless explicitly stated otherwise. We say an integer set $A$ \emph{fits in} an interval $I$ when there is its shifted copy $s+A$ entirely contained in $I$.

We will be in particular repeatedly using the following well-known consequences of the Prime Number Theorem.
\begin{lemma} \label{l21}
	The sum of all primes up to $x$ is
	$$
	(1+o(1))\frac{x^2}{2 \ln x},
	$$while the sum of squares of all primes up to $x$ is
	$$
	(1+o(1))\frac{x^3}{3 \ln x}.
	$$
\end{lemma}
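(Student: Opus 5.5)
We will derive both estimates from the Prime Number Theorem, $\pi(t)=(1+o(1))\,t/\ln t$, by Abel (Riemann--Stieltjes) summation. The second estimate then follows from the same computation with an extra factor $t$. Write
$$
S_1(x)=\sum_{p\leqslant x}p=\int_{2^-}^{x}t\,d\pi(t),\qquad S_2(x)=\sum_{p\leqslant x}p^2=\int_{2^-}^{x}t^2\,d\pi(t).
$$
Integrating by parts gives
$$
S_j(x)=x^j\pi(x)-j\int_2^x t^{j-1}\pi(t)\,dt
$$
for $j=1,2$.

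The first step is to evaluate the boundary term. By the Prime Number Theorem it equals $(1+o(1))\,x^{j+1}/\ln x$.

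The second step, which is the only point needing care, is to estimate the integral term. We will show that
$$
\int_2^x t^{j-1}\pi(t)\,dt=(1+o(1))\frac{x^{j+1}}{(j+1)\ln x}.
$$
To do this, fix $\varepsilon>0$ and split the range at $x^{1-\varepsilon}$ (or at $\sqrt{x}$).

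On $[2,x^{1-\varepsilon}]$ we use only $\pi(t)\leqslant t$. This part contributes at most $x^{(j+1)(1-\varepsilon)}$, which is $o(x^{j+1}/\ln x)$.

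On $[x^{1-\varepsilon},x]$ we substitute $\pi(t)=(1+o(1))\,t/\ln t$ and use $\ln x^{1-\varepsilon}\leqslant \ln t\leqslant \ln x$. This bounds the contribution between
$$
(1+o(1))\frac{x^{j+1}}{(j+1)\ln x}\qquad\text{and}\qquad (1+o(1))\frac{x^{j+1}}{(j+1)(1-\varepsilon)\ln x}.
$$
Letting $\varepsilon\to0$ gives the claimed asymptotic. Alternatively, one can apply L'Hôpital to $\int_2^x t^j/\ln t\,dt$ directly.

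Finally, we combine the two steps:
$$
S_j(x)=(1+o(1))\,x^{j+1}/\ln x\,\Bigl(1-\tfrac{j}{j+1}\Bigr)=(1+o(1))\frac{x^{j+1}}{(j+1)\ln x}.
$$
Taking $j=1$ gives $x^2/(2\ln x)$, and taking $j=2$ gives $x^3/(3\ln x)$.

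There is no real obstacle here. The only care needed is in the uniformity of the $o(1)$ in the second step, which the splitting argument handles.
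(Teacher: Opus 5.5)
Your argument is correct and is exactly the standard partial-summation derivation from the Prime Number Theorem; the paper gives no proof and simply cites the lemma as a well-known consequence of the PNT. One small caveat concerns your parenthetical option of splitting at $\sqrt{x}$: with the sandwich $\frac{1}{2}\ln x\leqslant \ln t\leqslant \ln x$ you describe, it only pins the integral within a factor of $2$, so either keep the split at $x^{1-\varepsilon}$ with $\varepsilon\to 0$, or integrate $\int t^j/\ln t\,dt$ by parts once more to see that the correction term is $O(x^{j+1}/\ln^2 x)$.
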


Let us further recall that an integer is \emph{$y$-smooth} if its largest prime divisor is at most $y$. We will need the following result of Canfield, Erd\H{o}s and Pomerance \cite{CEP}, see also \cite{Bru}. 
\begin{lemma}[\cite{CEP}]
	\label{l22}
	If $x=y^u$ where $u$ tends to infinity with $x$, and $y \geqslant (\ln x)^{1.001}$, then the number of $y$-smooth integers that do not exceed $x$ is 
	$$\frac{x}{u^{(1+o(1))u}},$$
	where the $o(1)$-term tends to $0$ as $x$ (and also $y$ and $u$) tend to infinity.
\end{lemma}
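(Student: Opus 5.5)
This is Lemma~\ref{l22}, the theorem of Canfield, Erd\H{o}s and Pomerance, which the paper quotes from \cite{CEP} and does not reprove. If one wanted a self-contained argument, the natural plan is to write $\Psi(x,y)$ for the number of $y$-smooth integers up to $x$ and to prove matching upper and lower bounds of the form $x\,u^{-(1+o(1))u}$ separately.

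\emph{Upper bound.} I would use Rankin's trick. For any $0<\sigma<1$,
$$\Psi(x,y)\leqslant \sum_{n\ \text{$y$-smooth}} \Big(\frac{x}{n}\Big)^{\sigma} = x^{\sigma}\prod_{p\leqslant y}\big(1-p^{-\sigma}\big)^{-1}.$$
I would choose $\sigma=1-\frac{\ln(u\ln u)}{\ln y}$. Then $x^{\sigma}=x\,(u\ln u)^{-u}$. For the product, Lemma~\ref{l21}-type estimates from the Prime Number Theorem give
$$\sum_{p\leqslant y}p^{-\sigma}\leqslant \ln\ln y+O\Big(\frac{y^{1-\sigma}}{(1-\sigma)\ln y}\Big)=\ln\ln y+O\Big(\frac{u\ln u}{\ln(u\ln u)}\Big).$$
This is $o(u\ln u)$ precisely because $u\to\infty$ and $\ln\ln y=o(u\ln u)$; the latter is where the hypothesis $y\geqslant(\ln x)^{1.001}$ enters, since it keeps $\ln y$ from being small compared with $u$. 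Hence the product is $e^{o(u\ln u)}$, and $\Psi(x,y)\leqslant x\,u^{-(1+o(1))u}$.

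\emph{Lower bound.} This is the main obstacle. The naive count is to take $k=\lceil u\rceil$ primes up to $x^{1/k}$, which gives at least $\pi(x^{1/k})^k/k!$ smooth numbers. That only yields $x/\big((\ln y)^u u^u\big)$, which is too weak when $\ln y$ is not $u^{o(1)}$. So I would split into two regimes.

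\begin{enumerate}
\item If $\ln y\leqslant u^{o(1)}$, i.e.\ $y$ is small relative to $x$, the naive count already suffices. This is the range where $y$ is near $(\ln x)^{1.001}$, and the exponent $1.001$ guarantees $u^{o(1)}$ control.
\item Otherwise, I would use a Buchstab-type recursion
$$\Psi(x,y)=\Psi(x,z)+\sum_{z<p\leqslant y}\Psi(x/p,p)$$
together with Hildebrand's identity
$$\Psi(x,y)\ln x=\int_1^x\frac{\Psi(t,y)}{t}\,dt+\sum_{p^m\leqslant x,\ p\leqslant y}\Psi(x/p^m,y)\ln p.$$
These give $\Psi(x,y)\geqslant x\,\rho(u)\,e^{-o(u\ln u)}$ by induction on $u$. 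Combined with the elementary estimate $\rho(u)=u^{-(1+o(1))u}$ for the Dickman function, this closes the lower bound.
\end{enumerate}

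The step I expect to be hardest is making the induction uniform down to $y=(\ln x)^{1.001}$ while keeping all errors inside $e^{o(u\ln u)}$. Since the paper only uses Lemma~\ref{l22} as a black box, I would in practice simply cite \cite{CEP} (see also \cite{Bru}).
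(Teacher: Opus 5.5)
The paper gives no proof of Lemma~\ref{l22}; it simply quotes \cite{CEP}, so falling back on the citation is exactly what the paper does. Your self-contained sketch, however, has a genuine error in the upper bound. You claim $\ln\ln y=o(u\ln u)$ and attribute it to the hypothesis $y\geqslant(\ln x)^{1.001}$. But that hypothesis bounds $y$ from \emph{below}: it amounts to $y\geqslant u^{1.001-o(1)}$. The condition $\ln\ln y=o(u\ln u)$ instead needs $y$ not to be too \emph{large} compared with $u$, and the lemma lets $u\to\infty$ arbitrarily slowly. For instance, with $u=\ln\ln\ln x$ one has $\ln\ln y\sim\ln\ln x$ but $u\ln(u\ln u)=o(\ln\ln x)$. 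Since $\sigma<1$, Mertens' theorem gives $\prod_{p\leqslant y}(1-p^{-\sigma})^{-1}\geqslant\prod_{p\leqslant y}(1-p^{-1})^{-1}\sim e^{\gamma}\ln y$. So your bound is at least of order $x(u\ln u)^{-u}\ln y$, which in this example exceeds $x$. With your $\sigma$ the factor $\zeta(1,y)\asymp\ln y$ is never cancelled, and it is absorbed into $u^{o(u)}$ only when $\ln\ln y=o(u\ln u)$.

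The hypothesis $y\geqslant(\ln x)^{1.001}$ does enter, but elsewhere: it gives $\ln(u\ln u)\leqslant\ln y/1.001$, i.e.\ $\sigma\geqslant 1-1/1.001>0$. You need this, because you replaced $\ln\prod_{p\leqslant y}(1-p^{-\sigma})^{-1}$ by $\sum_{p\leqslant y}p^{-\sigma}$. That replacement is accurate up to $O(1)$ only when $\sigma$ stays above $1/2$; for small $\sigma$ you must use something like $-\ln(1-p^{-\sigma})\leqslant p^{-\sigma}/(1-2^{-\sigma})$.

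A standard repair, for $y\geqslant(u\ln u)^4$ (so $\sigma\geqslant 3/4$), is to feed Rankin into the identity you already quote for the lower bound. Write the sum of $\ln n$ over $y$-smooth $n\leqslant x$ as $\sum_m\sum_{p^j\leqslant x/m,\,p\leqslant y}\ln p$, with $m$ running over $y$-smooth integers up to $x$. Bound the inner sum by Chebyshev as $O(\min\{y,x/m\}+\sqrt{x/m})$. Then use $\min\{y,x/m\}\leqslant x^{\sigma}y^{1-\sigma}m^{-\sigma}$, $\sqrt{x/m}\leqslant(x/m)^{\sigma}$ and $\int_1^x\Psi(t,y)\,dt/t\leqslant x^{\sigma}\zeta(\sigma,y)/\sigma$, where $\zeta(\sigma,y)=\prod_{p\leqslant y}(1-p^{-\sigma})^{-1}$. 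This gives $\Psi(x,y)\ln x\ll x^{\sigma}\zeta(\sigma,y)(1+y^{1-\sigma})$. The bad factor now appears as $\zeta(\sigma,y)/\ln x\approx\ln y/\ln x=1/u$. Since $\zeta(\sigma,y)/\zeta(1,y)\leqslant e^{O(u\ln u/\ln(u\ln u))}$ by your own prime-sum estimate, you get $x\,u^{-(1+o(1))u}$ uniformly. For $y<(u\ln u)^4$ one has $\ln\ln y=O(\ln\ln u)$, and your original argument works.

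The lower bound in your regime (2) is only announced; that is where the real work of \cite{CEP} lies, as you acknowledge. Finally, the paper uses only the upper half of the lemma, and only with $y=(\ln x)^5$ and $y=(\ln x)^2$. There $\ln\ln y=O(\ln\ln\ln x)=o(u\ln u)$, so plain Rankin (with the Euler-product bookkeeping above) already gives everything the paper needs, namely $\Psi(x,(\ln x)^c)\leqslant x^{1-1/c+o(1)}$.
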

We will also use the following simple fact.
\begin{lemma}
	\label{l23}
	Let $A$ and $B$ be two integer subsets of an interval $I$ of length $g>n$, and assume that $B$ fits in an interval of length $n$. Suppose, further, that $|A| \cdot |B| < g-n$. Then there is a shifted 	copy $z+B$ of $B$ which is disjoint from $A$ and fits in $I$.
\end{lemma}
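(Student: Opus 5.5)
We prove Lemma~\ref{l23} by counting: count the shifts $z$ that keep $z+B$ inside $I$, count the shifts that make $z+B$ meet $A$, and show the first number is larger.

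\textbf{Normalization.} Shift everything so that $I=\{1,2,\ldots,g\}$. Since $B$ fits in an interval of length $n$, we may translate $B$ so that $B\subseteq\{1,\ldots,n\}$; the shifted copies of $B$ are unchanged by this. Then $z+B\subseteq\{z+1,\ldots,z+n\}$, and this lies in $I$ whenever $0\leqslant z\leqslant g-n$. So there are at least $g-n+1$ admissible shifts.

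\textbf{Bad shifts.} A shift $z$ is bad if $(z+B)\cap A\neq\emptyset$. This happens exactly when $z+b=a$ for some $a\in A$ and $b\in B$, that is, when $z\in A-B=\{a-b: a\in A,\ b\in B\}$. The difference set $A-B$ has at most $|A|\cdot|B|$ elements. By hypothesis $|A|\cdot|B|<g-n<g-n+1$.

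\textbf{Conclusion.} The bad shifts cannot cover all $g-n+1$ admissible shifts. Choose an admissible shift $z\notin A-B$. Then $z+B$ is disjoint from $A$ and fits in $I$, as required.

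The only step needing any care is the normalization: we must check that the window of admissible shifts really has size $g-n+1$, or at least $g-n$. The condition $g>n$ ensures this range is nonempty. Even counting only $g-n$ admissible shifts, the strict inequality $|A|\cdot|B|<g-n$ still leaves one good shift. There is no genuine obstacle.
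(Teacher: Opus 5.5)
Your proof is correct and is the same counting argument the paper uses. There are at least $g-n$ shifts placing $B$ inside $I$, and at most $|A|\cdot|B|$ of them can meet $A$, since each such shift equals some $a-b$; your explicit normalization even yields $g-n+1$ admissible shifts, which is slightly more than needed.
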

\begin{proof}
	There are at least $g-n$ shifted copies of $B$ that lie inside $I$. At most $|A| \cdot |B|$ of these may intersect $A$, since each of them is obtained by shifting some element $b \in B$ to some $a \in A$. Therefore, there is a shifted copy of $B$  satisfying the required properties.
\end{proof}

\subsection{The lower bound in Theorem \ref{t11}}
The proof of the lower bound is quite straightforward, showing that it holds even for $m_D(n)$ where $D$ is the set of all primes $p \leqslant \sqrt n$.

\begin{proposition}
	\label{p24}
	Let $D$ be the set of all primes that do not exceed $\sqrt n$.
	Then 
	$$m_D(n) \geqslant \left(\frac{4}{3}-o(1)\right)\frac{n^{3/2}}{\ln n}.$$
\end{proposition}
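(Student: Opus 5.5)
The plan is to use the Chinese Remainder Theorem to show that the shifted copies of any two progressions with distinct prime differences cannot overlap much, and then to sum the forced gaps along the interval. Fix a packing of $\mathcal{F}_D$ into $[m]$, with shifts $s_p+A_p$ for $p\in D$. For each $p$, the copy $s_p+A_p$ consists of all integers congruent to $s_p$ modulo $p$ in its \emph{span} $I_p=[s_p+p,\; s_p+p\lfloor n/p\rfloor]$. This span has length $n-O(p)=n-O(\sqrt n)$.

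\textbf{Step 1 (CRT).} Take distinct primes $p,q\in D$. Suppose $I_p\cap I_q$ contains an interval $J$ of $pq$ consecutive integers. By the Chinese Remainder Theorem, $J$ contains an integer $x$ with $x\equiv s_p \pmod p$ and $x\equiv s_q\pmod q$. Since each copy contains every integer of its residue class inside its span, $x$ lies in both $s_p+A_p$ and $s_q+A_q$, contradicting disjointness. Hence $|I_p\cap I_q|<pq$.

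\textbf{Step 2 (ordering and telescoping).} Order the primes of $D$ as $p_1,\dots,p_t$ according to the left endpoints $\ell_1<\ell_2<\dots<\ell_t$ of their spans, where $t=\pi(\sqrt n)$. This is a permutation of $D$ determined by the packing. For consecutive indices $i$ and $i+1$, the overlap of $I_{p_i}$ and $I_{p_{i+1}}$ is at least $|I_{p_i}|-(\ell_{i+1}-\ell_i)$. Step 1 then gives
$$\ell_{i+1}-\ell_i \geqslant n-p_ip_{i+1}-O(\sqrt n).$$
Summing over $i$, and using that the last span also fits in $[m]$, we get
$$m\geqslant \ell_t-\ell_1+n-O(\sqrt n)\geqslant t\,n-\sum_{i=1}^{t-1}p_ip_{i+1}-O(t\sqrt n).$$

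\textbf{Step 3 (removing the dependence on the ordering).} The ordering is uncontrolled, so I bound the cross terms uniformly with $p_ip_{i+1}\leqslant (p_i^2+p_{i+1}^2)/2$. This gives $\sum_i p_ip_{i+1}\leqslant \sum_{p\leqslant\sqrt n}p^2$ for every ordering. By Lemma~\ref{l21}, this sum is $(1+o(1))\,n^{3/2}/(3\ln\sqrt n)=(\frac23+o(1))\,n^{3/2}/\ln n$. Also $t\,n=(2+o(1))\,n^{3/2}/\ln n$ by the Prime Number Theorem. The error term is $O(t\sqrt n)=O(n/\ln n)$, which is negligible. Therefore $m\geqslant(2-\frac23-o(1))\,n^{3/2}/\ln n=(\frac43-o(1))\,n^{3/2}/\ln n$.

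The main point needing care is the boundary bookkeeping in Steps 1 and 2. The overlap interval must genuinely consist of $pq$ consecutive integers lying within both spans, and the span lengths must be taken as $n-O(p)$ rather than $n$. These corrections contribute only $O(\sqrt n)$ per prime, hence $o(n^{3/2}/\ln n)$ in total.
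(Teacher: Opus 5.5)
Your proposal is correct and follows essentially the same route as the paper: the Chinese Remainder Theorem caps the overlap of the spans of $s_p+A_p$ and $s_q+A_q$ at $pq$, you order the progressions by position and telescope the gaps, and you bound $\sum_i p_ip_{i+1}$ by $\sum_{p\leqslant\sqrt n}p^2$ before applying Lemma~\ref{l21} and the Prime Number Theorem. The differences are cosmetic: you use $p_ip_{i+1}\leqslant (p_i^2+p_{i+1}^2)/2$ where the paper invokes the rearrangement inequality, and you track the $O(\sqrt n)$ boundary losses more explicitly than the paper does.
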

\begin{proof}
	Let $D$ be as above, and let $s_d$ denote the shift of $A_d$ so that the progressions $s_d+A_d, d \in D$, are pairwise disjoint and fit in $[m]$. We have to show that 
	$$
	m \geqslant \left(\frac{4}{3}-o(1)\right)\frac{n^{3/2}}{\ln n}.
	$$
	
	Since for any  two distinct primes, $p$ and $q$, the progressions $s_p+A_p$ and $s_q+A_q$ are disjoint,
	the Chinese Remainder Theorem implies that 
	the shortest  interval
	containing $s_p+A_p$ and the shortest interval containing $s_q+A_q$ cannot have more than $pq$ common elements. This implies that $|s_p-s_q| \geqslant n-\max\{p,q\}-pq$.
	
	Let $p_1<p_2< \ldots <p_t$ be all elements of $D$, ordered from small to large, and let $\pi$ be
	a permutation of $[t]$ defined according to the order of the shifts $s_d$, that is,
	$$s_{p_{\pi(1)}} \leqslant s_{p_{\pi(2)}} \leqslant \ldots \leqslant 
	s_{p_{\pi(t)}}.$$
	By the above fact and Lemma~\ref{l21} it follows that the difference between $s_{p_{\pi(t)}}$ and $s_{p_{\pi(1)}}$
	is at least $$(1-o(1)) nt -\sum_{i <t} p_{\pi(i)} p_{\pi(i+1)}.$$
	By the simple 	rearrangement inequality (see, e.g., \cite{HLP}), the last sum is at most the sum of squares of all elements in $D$.
	 By Lemma \ref{l21} this sum is 
	$$(1+o(1)) \frac{(\sqrt n)^3}{3 \ln (\sqrt n)}.$$ 
	Since $t=(1+o(1)) \sqrt n/ \ln (\sqrt n)$ by the Prime Number Theorem, the desired result follows.
\end{proof}

\subsection{The upper bound in Theorem \ref{t11}}

The proof of the upper bound in Theorem \ref{t11} is more complicated 
than that of the lower bound.
It will be convenient to describe the applied packing procedure 
in several steps, as follows. Put $s= \sqrt n (\ln n)^3$. To simplify the presentation, we omit 
floor and ceiling signs here, when they are not essential.

\vspace{0.2cm}

\noindent
{\bf Step 1: differences $d \leqslant s$ which are $(\ln n)^5$-smooth.}
\vspace{0.1cm}

\noindent
By Lemma \ref{l22} the number of such differences is $s^{4/5+o(1)} =n^{2/5+o(1)}$. 
We embed all progressions $A_d$ with differences $d$ considered in this step in 
separate (consecutive) intervals of length $n$.
The total length of these intervals is thus 
$n^{7/5+o(1)}=o(n^{3/2}/ \ln n)$.
\vspace{0.2cm}

\noindent
{\bf Step 2: differences $d \leqslant s$ with a prime divisor $p$ satisfying
	$(\ln n)^5 \leqslant p \leqslant \sqrt s$.}
\vspace{0.1cm}

\noindent
For each prime $p$ in the above range, define $$D_p^{(s)}=D_p\cap [s]=\{d \leqslant s: p| d\}.$$ Note that $|D_p^{(s)}| \leqslant s/p$. Note also that for each collection of $p$ consecutive elements in $D_p^{(s)}$, the corresponding progressions $A_{pi},A_{p(i+1)}.\ldots,A_{p(i+p)}$ can be packed in an interval of length $n$. Indeed, each $A_{pj}$ may occupy positions congruent to $j\pmod p$ in $[n]$. Therefore, the progressions with differences in $D_p^{(s)}$ can be packed in an interval of total length of at most $n \lceil |D_p^{(s)}|/p \rceil \leqslant O(ns/p^2)$.

Summing over all $p$ in the above range gives at most
$$
O\left(ns \sum_{p \geqslant (\ln n)^5} \frac{1}{p^2}\right) \leqslant
O\left(ns \frac{1}{(\ln n)^5}\right) 
= O\left(\frac{n^{3/2}}{(\ln n)^2}\right)
=o\left(\frac{n^{3/2}}{\ln n}\right).
$$
\vspace{0.2cm}

\noindent
{\bf Step 3: differences $d \leqslant s$ with a prime divisor $p$ satisfying $\sqrt s \leqslant p \leqslant s$.}
\vspace{0.1cm}

\noindent
Note that here we may also assume that these differences have no prime divisor larger than $(\ln n)^5$ and smaller than $\sqrt s$, but this is not needed for the argument.

	Consider first the differences $d \leqslant s=\sqrt n (\ln n)^3$ divisible by a prime $p>\sqrt n$. 
	Note that all progressions with differences equal to such primes can be packed in one interval of length $n$. Indeed, for each such $p$ it suffices to place the corresponding progression $A_p$ in $[n]$ at positions congruent to $0\pmod p$. 
	By the Chinese Remainder Theorem, the resulting embeddings in $[n]$ of any two such progressions 
	$A_p$ and $A_q$, with primes $p>q>\sqrt n$, cannot intersect, as desired.
	Likewise, we may pack all progressions $A_{2p}$, with prime $p>\sqrt n$, in a single (new) interval of length $n$, and so on. As all $d\leqslant s$ divisible by   a prime $p>\sqrt n$ are of the form $pg$, where 
	$g\leqslant (\ln n)^3$, we can fit all the corresponding progressions $A_d$ disjointly in an interval of total length $n(\ln n)^3=o(n^{3/2}/\ln n)$.
	
	Secondly, note that for each fixed prime $p$ such that $\sqrt s\leqslant p \leqslant \sqrt n$, all progressions $A_d$ with differences $d$ in the set $D_p^{(s)}$ can be packed in one interval of length $n$, since $|D_p^{(s)}| \leqslant s/p \leqslant p$. As we will expose that the remaining progressions, considered in Step 4 below, require no additional space, this suffices to obtain the upper bound with multiplicative factor roughly $2$ rather than $5/3$ (or $4/3$). Indeed, the number of such primes is at most $(1+o(1)) \sqrt n/ \ln (\sqrt n)$, by the Prime Number Theorem, so the total length of an interval required for these is at most $(2+o(1))n^{3/2}/ \ln n$. 
	We will however be more careful here and provide a tighter packing, admitting overlaps of minimal intervals containing $A_d$ with $d$ in selected sets $D_p^{(s)}$.

	First note that, by the above remarks, the progressions with differences in all sets $D_p^{(s)}$ with $\sqrt s\leqslant p \leqslant \sqrt n/\ln n$, can be packed disjointly in an interval of length $o(n^{3/2}/\ln n)$, due to the Prime Number Theorem, which implies that the number of such primes $p$ is $o(\sqrt{n}/\ln n)$. We are thus left with sets $D_p^{(s)}$ where $p$ are primes in the range $\sqrt n/\ln n < p \leqslant \sqrt n$. Set $r=\lfloor(\ln n)^4\rfloor$.
	
	Consider two consecutive primes $p<q$ such that $\sqrt n/\ln n< p<q \leqslant \sqrt n$ and note that both sets $|D_p^{(s)}|$ and $|D_q^{(s)}|$ are of size at most $r$, since $s=\sqrt n (\ln n)^3$. We will demonstrate that one may disjointly pack all progressions with differences in $D_p^{(s)}\cup D_q^{(s)}$ in an interval of length $2n-(1-o(1))pq$. It suffices to show, for some fixed integer $m=(1-o(1))pq$, that we may find $2r$ disjoint arithmetic progressions such that each among the first half of these includes every $p$-th element in $[m]$, while every remaining one consists of every $q$-th element of $[m]$. In other words, we will prove that there are $r$ distinct residues $i\pmod p$ and $r$ distinct residues $j \pmod q$ such that for any such pair $(i,j)$ and every $\ell\in [m]$, either $\ell\not\equiv i\pmod p$ or $\ell\not\equiv j\pmod q$. If so, the corresponding progressions can be extended (e.g., to the left for difference $p$ and to the right for difference $q$) to provide disjoint packings of $r$ arithmetic progressions of size $\lfloor n/p\rfloor$ and difference $p$ and $r$ arithmetic progressions of size $\lfloor n/q\rfloor$ and difference $q$ in an interval of length $2n-(1-o(1))pq$,
	which is more than enough for our purpose.
	
	We claim that we may take $m=(p-2r)q$ and $i=(-i'q) \pmod p$, with $i'\in[r]$, and $j=(-j'p) \pmod q$, with $j'\in[r]$ (hence, $i\in [p-1]$ and $j\in [q-1]$). Indeed, suppose that for some pair $(i,j)$ and any given $\ell\in [m]$, we have $\ell\equiv i\pmod p$ and $\ell\equiv j\pmod q$. Then there exist integers $s\in [0,q-2r)$ and $t\in [0,p-2r)$ such that $\ell = sp+i$ and $\ell = tq+j$,
	and there are positive integers $i',j'\in[r]$, $t'\leqslant r$, $s'$ such that $i=-i'q+s'p$ and $j=-j'p+t'q$. Consequently, $(s+s'+j')p=(t+t'+i')q$, hence $t+t'+i'\geqslant p$, and thus $t\geqslant p-2r$, a contradiction.
	
	Let $p_1>p_2>\ldots>p_h$ be all prime numbers in the interval $(\sqrt n/\ln n, \sqrt n]$. By pairing every two consecutive primes of this collection and using the above overlapping packing in a separate interval for every such pair, we may pack disjointly all progressions with differences in $D_{p_1}\cup\ldots\cup D_{p_h}$ in an interval of length at most
	\begin{eqnarray}
		&&(1+o(1))n\frac{\sqrt{n}}{\ln (\sqrt{n})}-(1-o(1))\sum_{1\leqslant i \leqslant \lfloor h/2\rfloor}\left(p_{2i}\right)^2 \nonumber\\
		&=& 
		(1+o(1))n\frac{\sqrt{n}}{\ln (\sqrt{n})}-(1-o(1))\frac{1}{2}\cdot \frac{(\sqrt{n})^3}{3\ln(\sqrt{n})} \nonumber\\
		&=& \left(\frac{5}{3}-o(1)\right)\frac{n^{3/2}}{\ln n}, \label{tighter_packing}
	\end{eqnarray}
	due to the Prime Number Theorem and Lemma~\ref{l21}.

\vspace{0.2cm}

\noindent
{\bf Step 4: differences $d > s$.}
\vspace{0.1cm}

\noindent
It is easy to see that the total size of all progressions in the investigated family $\mathcal{F}=\{A_1,A_2,\ldots, A_n\}$ is only
$O(n \ln n)$, while each of the remaining unpacked progressions is of size at most $\sqrt n/ (\ln n)^3$. Therefore all of them can be packed one by one into the existing packing using Lemma \ref{l23} (with room to spare).

This completes the proof of the upper bound and hence the proof of Theorem \ref{t11}. \hfill $\Box$

	\section{Bounds for $m_k(n)$ with small $k$}
	
	We will now be concerned with packing only the first $k< n$ arithmetic progressions of the collection $A_1,A_2,\ldots, A_n$ in a short interval. 
	We will prove the following supplementary result to Theorem~\ref{t11} for $k\leqslant \sqrt{n}$. 
	(For  $k>\sqrt{n}$, the thesis of Theorem~\ref{t11} remains valid and provides the right order of $m_k(n)$, since the corresponding lower bound exploits prime differences up to $\sqrt{n}$ exclusively.)

	\begin{theorem}\label{t31}
		Let $k$ and $n$ be arbitrary positive integers, with $ k\leqslant \sqrt{n}$. If $k\ll \sqrt{n}$, then 
		$$m_k(n) = (1+o_k(1))\frac{kn}{\ln k}.$$
		Otherwise, if $k\leqslant \sqrt{n/2}$, then
		$$\left(1-\frac{k^2}{3n}-o(1)\right)\frac{kn}{\ln k} \leqslant m_k(n)\leqslant \left(1-\frac{k^2}{3n}+\frac{k^2}{48n}+o(1)\right)\frac{kn}{\ln k},$$
		and if $\sqrt{n/2} \leqslant k \leqslant \sqrt{n}$, then
		$$\left(1-\frac{k^2}{3n}-o(1)\right)\frac{kn}{\ln k} \leqslant m_k(n)\leqslant \left(\frac{1}{2}+\frac{1}{3\sqrt{2}}\frac{\sqrt{n}}{k}+\frac{k^2}{48n}+o(1)\right)\frac{kn}{\ln k}.$$
	\end{theorem}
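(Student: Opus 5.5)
The plan is to mirror the two halves of the proof of Theorem~\ref{t11}, with $\sqrt n$ replaced by $k$.

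\textbf{Lower bound.} I would repeat the argument of Proposition~\ref{p24}, with $D$ the set of all primes $p\leqslant k$. There are $t=(1+o(1))k/\ln k$ such primes. Order their shifts, and use the Chinese Remainder Theorem bound $|s_p-s_q|\geqslant n-\max\{p,q\}-pq$ for consecutive shifts. The span of the packing is then at least $(1-o(1))nt-\sum_{i<t}p_{\pi(i)}p_{\pi(i+1)}$. By the rearrangement inequality and Lemma~\ref{l21}, the sum is at most $(1+o(1))k^3/(3\ln k)$. This gives
\[
m_k(n)\geqslant \Bigl(1-\frac{k^2}{3n}-o(1)\Bigr)\frac{kn}{\ln k}
\]
in all ranges. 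When $k\ll\sqrt n$ the correction $k^2/(3n)$ is $o(1)$, which yields the lower half of the asymptotic statement.

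\textbf{Upper bound.} I would run Steps~1--4 of the proof of Theorem~\ref{t11} with the threshold $s$ and the smoothness parameter rescaled to $k$. The aim is to show that every difference $d\leqslant k$ that is not a prime close to $k$ costs only $o(kn/\ln k)$ of extra length.
\begin{itemize}
\item Composite or smooth differences are handled as in Steps~1--3. A prime $p$ dividing many $d\leqslant k$ packs $p$ of them into one interval of length $n$ by using residue classes mod $p$. Smooth numbers are few by Lemma~\ref{l22}.
\item Each prime $p$ with $\sqrt k\leqslant p\leqslant k$ has $|D_p\cap[k]|\leqslant k/p\leqslant p$, so all multiples of $p$ fit into one interval of length $n$. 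There are $(1+o(1))k/\ln k$ such primes, which already gives $m_k(n)\leqslant(1+o(1))kn/\ln k$. This settles the case $k\ll\sqrt n$.
\item Step~4 is not needed, since all differences are at most $k$. However, progressions with large $d$ are short, so Lemma~\ref{l23} can still absorb leftovers.
\end{itemize}

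\textbf{Refined upper bound when $k$ is comparable to $\sqrt n$.} Here I would use the overlapping construction from Step~3. For each pair of consecutive primes $p<q$, the multiples of $p$ and of $q$ are packed into an interval of length $2n-(1-o(1))pq$, via the residue choice $i\equiv -i'q$, $j\equiv -j'p$. Pairing the primes and summing as in (\ref{tighter_packing}) saves about $\tfrac12\sum p^2\approx k^3/(6\ln k)$. This is only half of the $k^3/(3\ln k)$ needed to match the lower bound. To approach $1-\frac{k^2}{3n}$, I would instead chain the primes: each interval overlaps both its neighbours, giving savings of about $\sum p_ip_{i+1}\approx k^3/(3\ln k)$. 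The residue classes used at the two ends of the interval for $p$ must then be compatible, and the loss from this is what I expect produces the error term $k^2/(48n)$.

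\textbf{Obstacles.}
\begin{itemize}
\item The chaining requires $pq<n$ so that an interval can overlap on both sides. This fails once $k>\sqrt{n/2}$, because two overlaps of size about $k^2$ would exceed $n$. In that regime I would cap the overlaps and optimize, which should yield the bound $\tfrac12+\frac{1}{3\sqrt2}\frac{\sqrt n}{k}$.
\item The main difficulty is the chained overlap construction: choosing residues mod $p$ so that the progressions avoid those of both the previous and the next prime simultaneously. I would first try the ansatz $i\equiv -aq+br$ for the neighbours $q,r$, and bound the number of lost residues by a CRT counting argument as in Step~3.
\end{itemize}
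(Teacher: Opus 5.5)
Your lower bound and your treatment of the case $k\ll\sqrt n$ are correct and agree with the paper. The gap is in the two refined upper bounds for $k=\Theta(\sqrt n)$.

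You propose to chain \emph{all} primes in $(k/\ln k,k]$ with two-sided overlaps. However, you leave the central step as an untested ansatz: for a prime $p$ with several multiples $p,2p,\ldots,\lfloor k/p\rfloor p\leqslant k$, you must choose residues mod $p$ that are compatible with both neighbours at once. You then simply assert that whatever is lost there produces the term $\frac{k^2}{48n}$. Nothing in your argument yields that constant. In fact, if your chaining worked with only $o(1)$ relative loss, you would get $\bigl(1-\frac{k^2}{3n}+o(1)\bigr)\frac{kn}{\ln k}$ for $k\leqslant\sqrt{n/2}$. That matches the lower bound, and the paper only conjectures it. As written, the second and third upper bounds are not proved.

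The missing observation is to split the primes at $k/2$.

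\textbf{Primes in $(k/2,k]$.} Here $D_p^{(k)}=\{p\}$, so each such prime contributes a single progression, and two-sided chaining is immediate. Progressions with coprime differences $p,q$ meet in at most one point of any $pq$ consecutive integers. So, taking the primes in monotone order, consecutive spans can be slid to overlap by $(1-o(1))\min\{pq,n/2\}$ with no collision. The cap $n/2$ keeps the two overlaps of a span apart.

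\textbf{Primes in $(k/\ln k,k/2]$.} Here one uses only the paired one-sided construction of Step~3, saving about $pq\approx p^2$ per pair.

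The total saving is then
\[
\sum_{k/2<p\leqslant k}p^2+\frac12\sum_{p\leqslant k/2}p^2\approx\Bigl(\frac13-\frac1{48}\Bigr)\frac{k^3}{\ln k}.
\]
So the $\frac1{48}$ is exactly half of $\sum_{p\leqslant k/2}p^2\approx\frac{k^3}{24\ln k}$, forfeited by pairing instead of chaining. It is not a residue-compatibility loss.

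For $\sqrt{n/2}\leqslant k\leqslant\sqrt n$ the same scheme applies, saving $p^2$ per prime in $(k/2,\sqrt{n/2}]$ and $\frac n2$ per prime in $(\sqrt{n/2},k]$. A direct computation gives $\bigl(\frac12+\frac{1}{3\sqrt2}\frac{\sqrt n}{k}+\frac{k^2}{48n}\bigr)\frac{kn}{\ln k}$. Your ``cap and optimize'' remark also leaves the $\frac{k^2}{48n}$ term in this regime unexplained.
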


	\begin{proof} 
		For the lower bound it again suffices to focus on progressions $A_p$ with prime $p\leqslant k$. By the Prime Number Theorem, the number of such primes equals $t=(1+o_k(1))k/\ln k$. 
		By the same reasoning as in the proof of Proposition~\ref{p24}, 
		since intervals including arithmetic progressions with differences given by the analyzed primes cannot overlap too much, any interval including disjoint copies of all these progressions cannot be shorter than
		$$\left(1-o_k(1)\right)nt-\frac{k^3}{3\ln k},$$
		which concludes the proof of the lower bound in all cases.

		For the upper bound, note first that by Lemma~\ref{l22}, the number of $(\ln k)^2$-smooth positive integers $d\leqslant k$ is at most $k^{1/2+o(1)}$. So, the arithmetic progressions $A_d$ with such differences can be packed in an interval of length 
		\begin{equation}\label{PackingInitial1}
		o_k\left(\frac{kn}{\ln k}\right).
		\end{equation}
		
		For a prime $p$, recall that $D_p^{(k)}=D_p\cap [k]=\{d\leqslant k: p|d \}$. Analogously as in the proof of Theorem~\ref{t11}, 
		for every fixed $p\leqslant \sqrt{k}$, any $p$ progressions $A_d$ with $d\in D_p^{(k)}$ can be packed in $[n]$. Thus, all arithmetic progressions with differences in $D_p^{(k)}$ for all primes $p\in [(\ln k)^2, \sqrt{k}]$ can be packed in an interval of total length at most
		\begin{equation}\label{PackingInitial2}
		n\cdot\sum_{(\ln k)^2\leqslant p \leqslant \sqrt{k}}\left\lceil\frac{|D_p^{(k)}|}{p}\right\rceil \leqslant nk\cdot O\left(\sum_{p\geqslant (\ln k)^2}\frac{1}{p^2}\right) = O\left(\frac{kn}{(\ln k)^2}\right) = o_k\left(\frac{kn}{\ln k}\right).
		\end{equation}

		We are left with differences in $D_p^{(k)}$ for primes satisfying $\sqrt{k}<p\leqslant k$.
		As before, 
		the progressions $A_d$ with differences $d$ in any such fixed set $D_p^{(k)}$ can be packed in $[n]$.
		Thus, by the Prime Number Theorem, all progressions $A_d$ with $d$ in the union of the sets $D_p^{(k)}$ over $p\leqslant k/\ln k$ can be packed in an interval of total length at most 
		\begin{equation}\label{PackingInitial3}
		O\left(\frac{nk}{(\ln k)^2}\right).
		\end{equation}
		
		Likewise, the leftover progressions $A_d$ with $d\in D_p^{(k)}$ for the remaining (large) values of $p$, with  $k/\ln k<p\leqslant k$, can be packed in an interval of length 
		\begin{equation}\label{WithoutOverlappings}
		n(1+o_k(1))\frac{k}{\ln k}
		\end{equation}
		so that the shortest intervals containing the embeddings of progressions $A_d$ with $d$ in any $D_p^{(k)}$ do not overlap with the shortest interval containing the embeddings of  progressions $A_d$ with $d$ in any other $D_{p'}^{(k)}$, $p'\neq p$. We may however tighten such packing by arranging these intervals according to the decreasing order of the corresponding primes and assure their substantial overlappings.
		
		For primes $p\in (k/\ln k,k/2]$ we proceed as in Step 3 of the proof of Theorem~\ref{t11}, i.e., we first partition them into pairs of consecutive primes and for any such pair $\{p,q\}$ we assure an overlapping of length $(1-o_k(1))pq$. By Lemma~\ref{l21}, the total sum of the lengths of such overlappings is thus
		\begin{equation}\label{FirsttOverlappings}
		(1-o_k(1))\frac{1}{2}\cdot \frac{\left(\frac{k}{2}\right)^3}{3\ln \left(\frac{k}{2}\right)} 
		\end{equation}
		cf. estimates in~(\ref{tighter_packing}).
		
		Let $p_1>p_2>\ldots>p_z$ be all (remaining) prime numbers in $(k/2,k]$. Since for each such $p_i$ 
		we  have $D_{p_i}^{(k)}=\{p_i\}$, we may be more efficient in this regime and provide two-sided overlappings for shits of (almost) all $A_{p_i}$. Namely, by the Chinese Remainder Theorem, it is straightforward to 
		 assure that the intersection of any two intervals containing shifted copies of $A_{p_i}$ and $A_{p_{i+1}}$ has length at least $(1-o_k(1))\min\{p_ip_{i+1},n/2\}$, for every $i<z$.
		 Estimating again by means of  Lemma~\ref{l21}, we thus infer that for $k\leqslant \sqrt{n/2}$
		  all the remaining arithmetic progressions, those not taken into account in~(\ref{PackingInitial1}), (\ref{PackingInitial2}) and~(\ref{PackingInitial3}),  can be packed in an interval not exceeding the following length (where the first two components below correspond to~(\ref{WithoutOverlappings}) and~(\ref{FirsttOverlappings}), while the last one estimates the total length of overlappings for primes in $(k/2,k]$):		
		\begin{eqnarray}
			&&n(1+o_k(1))\frac{k}{\ln k}
			-(1-o_k(1))\frac{1}{2}\cdot \frac{\left(\frac{k}{2}\right)^3}{3\ln \left(\frac{k}{2}\right)} 
			-(1-o_k(1))\left(\frac{k^3}{3\ln k} - \frac{\left(\frac{k}{2}\right)^3}{3\ln \left(\frac{k}{2}\right)} \right) \nonumber\\
			&=& (1-o_k(1))\left(\frac{nk}{\ln k} - \frac{k^3}{\ln k} \left(\frac{1}{3}-\frac{1}{48}\right)\right); \nonumber
		\end{eqnarray}
		while for  $k\in [\sqrt{n/2}, \sqrt{n}]$ -- in an interval 
		not exceeding the following length (where the last two components
		estimate the total lengths of overlappings separately for primes in $(k/2,\sqrt{n/2}]$ and in $(\sqrt{n/2},k]$):
		\begin{eqnarray}
					&&(1+o(1))\left(\frac{nk}{\ln k}
			-\frac{1}{2}\cdot \frac{\left(\frac{k}{2}\right)^3}{3\ln \left(\frac{k}{2}\right)} 
			-\left(\frac{\left(\frac{\sqrt{n}}{\sqrt{2}}\right)^3}{3\ln \left(\frac{\sqrt{n}}{\sqrt{2}}\right)} - \frac{\left(\frac{k}{2}\right)^3}{3\ln \left(\frac{k}{2}\right)} \right) 
			-\frac{n}{2}\cdot\left(\frac{k}{\ln k}-\frac{\left(\frac{\sqrt{n}}{\sqrt{2}}\right)}{\ln \left(\frac{\sqrt{n}}{\sqrt{2}}\right)}\right)\right)\nonumber\\
			&=& (1+o(1))\left(\frac{nk}{2\ln k} + \frac{n^{3/2}}{3\sqrt{2}\ln k} +\frac{k^3}{48\ln k}\right). \nonumber
		\end{eqnarray}
	\end{proof}

	\section{Bounds for $M_k(n)$ when $k\leqslant n$}\label{SectionMk-small_k}
	
	Let $n$ be a fixed positive integer. We now turn our attention to packing collections of arithmetic progressions $\{B_1,B_2,\ldots, B_k\}$, where each $B_d$ has size $n$ and is of the form $B_d=\{d,2d,\ldots,nd\}$, for $d=1,2,\ldots,k$. We will first study the case of $k\leqslant n$. 
	
	We will need the following well-known number theoretic facts.
	\begin{lemma}\label{l41}
		For every integer $r\geqslant 2$, $\sum_{i=1}^{\infty}n^{-r}=\zeta(r)$, where $\zeta$ is the Rieman zeta function. In particular,
		$$\zeta(2)=\frac{\pi^2}{6}\approx 1.645,~  \zeta(3)\approx 1.202.$$
	\end{lemma}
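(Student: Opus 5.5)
The first claim is essentially the definition of the Riemann zeta function on the integers $r\geqslant 2$. The summation index in the displayed series should be read as $\sum_{i=1}^{\infty} i^{-r}$, since the variable $n$ is already reserved for the progression size. The plan is therefore to (i) justify that the series converges, so that $\zeta(r)$ is a well-defined real number, (ii) establish the closed form $\zeta(2)=\pi^2/6$, and (iii) certify the two numerical approximations.

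For convergence I will compare with an integral. Since $x\mapsto x^{-r}$ is decreasing, for $r\geqslant 2$ we have $\sum_{i\geqslant N+1} i^{-r}\leqslant \int_N^\infty x^{-r}\,dx = N^{1-r}/(r-1)$. Hence the partial sums are bounded and increasing, and the series converges. The same comparison also gives the lower bound $\int_{N+1}^\infty x^{-r}\,dx$ on the tail, which I will reuse for the numerics.

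For the Basel identity I would use the Fourier expansion of $f(x)=x$ on $(-\pi,\pi)$.
\begin{itemize}
\item Computing the coefficients gives $x=\sum_{i\geqslant 1} \frac{2(-1)^{i+1}}{i}\sin(ix)$ in $L^2$.
\item Parseval's identity then yields $\frac{1}{\pi}\int_{-\pi}^{\pi}x^2\,dx=\sum_{i\geqslant1}\frac{4}{i^2}$.
\item The left-hand side equals $\frac{2\pi^2}{3}$, so $\zeta(2)=\pi^2/6$.
\end{itemize}
An alternative is Euler's comparison of the product $\sin x/x=\prod_{i}(1-x^2/(i^2\pi^2))$ with the Taylor series, reading off the $x^2$ coefficient. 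The Fourier route is preferable because it needs no justification of the infinite product. Then $\pi^2/6\approx 1.6449$ is immediate.

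For $\zeta(3)$ no closed form is available (it is Apéry's constant), so the value must be certified numerically. I will compute the partial sum $S_N=\sum_{i\leqslant N} i^{-3}$ for a moderate $N$, say $N=10$. The integral comparison above then traps the tail between $\frac{1}{2(N+1)^2}$ and $\frac{1}{2N^2}$, and the gap between these bounds is about $10^{-3}$ for $N=10$. That gap is too coarse for three decimals, so I would sharpen it with the midpoint estimate: by convexity, $\sum_{i>N} i^{-3}$ is close to $\int_{N+1/2}^\infty x^{-3}\,dx$, with error $O(N^{-4})$. Equivalently, I could take $N$ around $30$ with the crude bounds. Either way this gives $\zeta(3)\approx 1.202$.

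The only step needing genuine care is the exact evaluation of $\zeta(2)$. Since the lemma is explicitly labelled well-known, in the paper it suffices to cite a standard reference for both the identity and the numerical values rather than reproduce these arguments.
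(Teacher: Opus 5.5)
Your proposal is correct, and you are right that the summand should read $i^{-r}$ rather than $n^{-r}$. The paper gives no proof at all. It introduces the lemma as one of the ``well-known number theoretic facts'', implicitly taking the series as the definition of $\zeta$ at integers $r\geqslant 2$ and the values $\pi^2/6$ and $1.202\ldots$ as standard. That is the citation option you mention at the end.

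Your self-contained route (integral comparison, Parseval for $f(x)=x$, numerical enclosure of Ap\'ery's constant) makes two things explicit that the citation leaves implicit. First, the tail bound $\sum_{i>\ell}i^{-r}\leqslant \ell^{1-r}/(r-1)$ is what justifies replacing $\sum_{i\in[\ell]}i^{-2}$ and $\sum_{i\in[\ell]}i^{-3}$ by $\zeta(2)$ and $\zeta(3)$, with $\ell=\lfloor\ln k\rfloor\to\infty$, in the proof of Theorem~\ref{t41}. Second, you get a rigorous certificate for the numerical constants.

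On the numerics, your midpoint estimate is already a rigorous one-sided bound, not merely an approximation. By convexity of $x^{-3}$, $i^{-3}\leqslant\int_{i-1/2}^{i+1/2}x^{-3}\,dx$. So with $N=10$ you get $S_{10}+\frac{1}{2\cdot 11^2}\leqslant \zeta(3)\leqslant S_{10}+\frac{1}{2\cdot (10.5)^2}$, i.e.\ $1.2016<\zeta(3)<1.2021$. This settles $\zeta(3)\approx 1.202$ without going to $N=30$.

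If you also want to certify the downstream constant $0.526$ of Theorem~\ref{t12}, note that it equals $0.52595\ldots$, so the margin is tiny. You then need $\zeta(3)>1.2019$, which your crude lower tail bound at $N=10$ does not give. The trapezoid bound $\sum_{i>N}i^{-3}\geqslant\frac{1}{2N^2}-\frac{1}{2N^3}$ does: it yields $\zeta(3)>1.2020$ at $N=10$.
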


Recall that $M_k(n)$ denotes the least size of an interval in which the above collection of progressions $B_d$ can be packed. Here is an analog of Theorem \ref{t31} for the function $m_k(n)$.

	\begin{theorem}\label{t41}
		Let $k\leqslant n$ be arbitrary positive integers. If $k\ll n$, then 
		$$\left(\frac{1}{2}-o_k(1)\right)\frac{k^2n}{\ln k}\leqslant M_k(n) \leqslant \left(\frac{\zeta(2)}{2}+o_k(1)\right)\frac{k^2n}{\ln k}
		<\left(0.823+o_k(1)\right)\frac{k^2n}{\ln k}.$$
		Otherwise, if $k\leqslant n/2$, then 
		$$\left(\frac{1}{2}-\frac{k}{3n}-o(1)\right)\frac{k^2n}{\ln k} \leqslant M_k(n)\leqslant \left(\frac{\zeta(2)}{2}-\frac{\zeta(3)k}{3n}+o(1)\right)\frac{k^2n}{\ln k},$$
		while if $k\in[n/2,n]$, then
		$$\left(\frac{1}{2}-\frac{k}{3n}-o(1)\right)\frac{k^2n}{\ln k} \leqslant M_k(n)
		\leqslant \left(\frac{\zeta(2)-0.5}{2}-\frac{(\zeta(3)-1)k}{3n}
		+\frac{1}{48}\cdot\frac{n^2}{k^2}
		+o(1)\right)\frac{k^2n}{\ln k}.$$
	\end{theorem}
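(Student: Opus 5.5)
The lower bound follows the argument of Proposition~\ref{p24} and Theorem~\ref{t31}; the upper bound needs a construction in which the progressions sharing a cofactor $g$ are stacked in distinct residue classes modulo $g$.

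\textbf{Lower bound.} I restrict attention to the progressions $B_p$ with $p\leqslant k$ prime. The hull of $s_p+B_p$ has length about $np$. For distinct primes $p,q$ the progressions $s_p+B_p$ and $s_q+B_q$ are disjoint, so by the Chinese Remainder Theorem their hulls share at most about $pq$ elements. Ordering the primes by their shifts and applying the rearrangement inequality, as in Proposition~\ref{p24}, any packing interval has length at least
$$\sum_{p\leqslant k} np-\sum_{p\leqslant k}p^2=(1-o(1))\left(\frac{nk^2}{2\ln k}-\frac{k^3}{3\ln k}\right)$$
by Lemma~\ref{l21}. This equals $\left(\frac12-\frac{k}{3n}-o(1)\right)\frac{k^2n}{\ln k}$ and covers all three regimes; when $k\ll n$ the second term is negligible.

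\textbf{Upper bound.} Write each $d\leqslant k$ as $d=pg$ with $p$ its largest prime factor. The following progressions are set aside first, in separate intervals of total length $o(k^2n/\ln k)$:
\begin{itemize}
\item those with $d$ being $(\ln k)^2$-smooth, of which there are $k^{1/2+o(1)}$ by Lemma~\ref{l22};
\item those with $g$ larger than a slowly growing function;
\item those with $p$ in a range containing $o(k/\ln k)$ primes.
\end{itemize}

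For a fixed small $g$, consider the progressions $B_{pg}$ with $p\leqslant k/g$ prime. Since $B_{pg}\subseteq g\mathbb{Z}$, a shifted copy of it lies in a single residue class modulo $g$. I partition these primes, in decreasing order, into consecutive blocks of $g$ primes each. Within a block I place the $g$ progressions in the same interval of length about $npg$ (set by the largest prime of the block), giving them distinct residues modulo $g$, so they are pairwise disjoint. The total length used for this $g$ is then about
$$\frac1g\sum_{p\leqslant k/g}npg=\frac{n(k/g)^2}{2\ln k}\,g\cdot\frac1g=\frac{nk^2}{2g^2\ln k}.$$
Summing over $g$ gives $\left(\frac{\zeta(2)}{2}+o(1)\right)\frac{k^2n}{\ln k}$ by Lemma~\ref{l41}. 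This is already the bound for $k\ll n$.

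For the correction term I overlap consecutive blocks, as in Step~3 of Theorem~\ref{t11} and in Theorem~\ref{t31}. Scaling the $(p,q)$ residue construction by $g$, one residue class modulo $g$ can save an overlap of about $g\cdot pq$ between neighbouring blocks. Summing $g\,p^2$ over the $(k/g)/(g\ln k)$ blocks gives a saving of about $\frac{k^3}{3g^3\ln k}$ for each $g$, hence $\frac{\zeta(3)k}{3n}\cdot\frac{k^2n}{\ln k}$ in total. This matches the bound for $k\leqslant n/2$.

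For $k\in[n/2,n]$, the $g=1$ layer (primes in $(k/2,k]$, where $D_p\cap[k]=\{p\}$) is treated as in Theorem~\ref{t31}. There two-sided overlaps are available, but each is capped at $(1-o(1))\min\{pq,\,nq/2\}$ because the hull of a progression has length only about $np$. Redoing the Lemma~\ref{l21} computation with this cap replaces $\zeta(2)$ by $\zeta(2)-\frac12$ and $\zeta(3)$ by $\zeta(3)-1$, and produces the $\frac1{48}\frac{n^2}{k^2}$ term.

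\textbf{Main obstacle.} I expect the hardest part to be the overlapping step inside the residue-class stacking. I have to check that the CRT overlap construction of Step~3 can be carried out simultaneously in all $g$ residue classes, with $g$ distinct pairs of primes overlapping in parallel, while keeping distinct residues modulo $g$. I must also check that the losses from uneven block sizes and rounding stay $o(1)$. My first attempt will be to choose the residues modulo $pq$ exactly as in Step~3, lift them to multiples of $g$, and shift each class by its own residue $c\in\{0,\dots,g-1\}$.
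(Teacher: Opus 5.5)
The gap is in your second ``set aside'' bullet. Write $P(d)$ for the largest prime factor of $d$, so your cofactor is $g=d/P(d)$.

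\textbf{The $g>G$ class is almost everything.} The differences with $g\le G$ number at most $\sum_{g\le G}\pi(k/g)=O(k\ln G/\ln k)=o(k)$, where $\pi(x)$ is the number of primes up to $x$. So the class $g>G$ contains almost every $d\le k$. Placed in separate intervals, these progressions cost $(\frac12-o(1))nk^2$, a factor of order $\ln k$ above the target. You give no other mechanism for them.

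\textbf{The missing idea.} You need residue classes modulo the \emph{prime} rather than the cofactor. Every shifted $B_{mp}$ lies in a single class mod $p$. Hence, for a prime $p>\sqrt k$, all $\lfloor k/p\rfloor<p$ progressions $B_{mp}$ with $mp\le k$ fit disjointly into one interval of length $kn$. For $(\ln k)^2<p\le\sqrt k$, $\lceil k/p^2\rceil$ such intervals suffice. Every $d$ with $g>G$ has $P(d)<k/G$, and there are only $\pi(k/G)=o(k/\ln k)$ such primes. The total cost is therefore $nk\,\pi(k/G)+O(nk^2/(\ln k)^2)=o(nk^2/\ln k)$. This is exactly how the paper handles all $d$ with $P(d)\le k/\ln k$, and the same device also justifies your third bullet.

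\textbf{The rest of your plan.} Your lower bound is the paper's argument. Your main construction is a genuine alternative to the paper's. The paper groups by the prime: for $p\in(k/(i+1),k/i]$ it nests $B_p,\dots,B_{(i-1)p}$ inside the hull of $B_{ip}$, in classes mod $p$. You instead group by the cofactor and stack the $B_{pg}$ in classes mod $g$. Both give literally the same main sum, since
\[
\sum_{g}\sum_{p\le k/g}(np-p^2)=\sum_p\lfloor k/p\rfloor(np-p^2)=\sum_i\sum_{p\in P_i}i(np-p^2).
\]

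\textbf{Your anticipated obstacle is mild.} Distinct residue classes mod $g$ never interact, so treat each class as an independent chain. After dividing by $g$, consecutive members are single progressions with coprime differences $p>q$. By the Chinese Remainder Theorem their hulls can share $pq-p-q+1$ positions. This works on both sides of each member provided its two neighbours' differences sum to at most $n$. That holds automatically for $g\ge2$ and for $g=1$, $k\le n/2$. For $g=1$ and $k>n/2$ it yields the $\min\{pq,nq/2\}$ cap. The chains within one layer interlace, so they differ in length by at most one term, $O(nk)$. Summed over $g\le G$ this is negligible.
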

	\begin{proof} 
		The lower bound follows by analysis parallel 
		to the ones in the proofs of Theorems~\ref{t11} and~\ref{t31}, but this time we estimate the minimal length of an interval containing disjoint copies of all $B_p$ with prime $p\leqslant k$. 
		Let $p_1<p_2< \ldots <p_t$ be all such primes, and let $\pi$ be
	a permutation of $[t]$ defined consistently with 
	the order of shifts $s_{p_i}$ of all $B_{p_i}$ yielding optimal packing of these in the shortest interval, say $I$.
	By the Chinese Remainder Theorem, $|s_{\pi(p_{i+1})}-s_{\pi(p_i)}|\geqslant (n-1)p_{\pi(i)} - p_{\pi(i)} p_{\pi(i+1)}$ for every $i<t$. By Lemma \ref{l21}, we thus infer that the length of $I$ equals at least 
	$$\sum_{i <t} (n-1)p_{\pi(i)} - \sum_{i <t} p_{\pi(i)} p_{\pi(i+1)} 
	\geqslant (1-o_k(1))\left(\frac{nk^2}{2\ln k} - \frac{k^3}{3\ln k}\right).$$

		For the upper bound we once more first note that by Lemma~\ref{l22}, the number of $(\ln k)^2$-smooth positive integers $d$ not exceeding $k$ equals at most  $k^{1/2+o(1)}$, so the corresponding progressions $B_d$ can be packed into an interval of length at most $k^{1/2+o(1)}\cdot kn = o_k(k^2n/\ln k)$.
		
		As before, we define $D_p^{(k)}=\{d\leqslant k : p | d\}$, and likewise as in equation (\ref{PackingInitial2}), for primes $p\in ((\ln k)^2,\sqrt{k}]$ we can do with (a fresh) interval of length
		$$nk\cdot\sum_{(\ln k)^2\leqslant p\leqslant \sqrt{k}}\left\lceil\frac{|D_p^{(k)}|}{p}\right\rceil 
		= O\left(\frac{k^2n}{(\ln k)^2}\right)
		= o_k\left(\frac{k^2n}{\ln k}\right).$$
		
		Next, by the Prime Number Theorem, there are at most $(1+o_k(1))k/(\ln k)^2$ primes $p\in(\sqrt{k},k/\ln k]$. As for each of these it is sufficient to use an interval of length $kn$ to handle all differences in any given $D_p^{(k)}$, we may cope with these primes sparing an interval of length at most $(1+o_k(1))k^2n/(\ln k)^2 = o_k(k^2n/\ln k)$ to that end. 
		
		Let $\mathbb{P}$ denote the set of all prime numbers. We partition the remaining primes $p$, with $k/\ln k<p\leqslant k$, into subsets $P_i:=\{p\in \mathbb{P}: k/(i+1)<p\leqslant k/i\}$, where $i=1,2,\ldots,\ell=\lfloor \ln k \rfloor$. For each $i\leqslant\ell$ we will now pack in a separate short interval $J_i$ all $B_d$ with $d\in\bigcup_{p\in P_i} D_p^{(k)}$, thus completing the construction of packing of all $B_d$ with $d\leqslant k$.
		
		For $i=1$ we have $D_p^{(k)}=\{p\}$ for every $p\in P_1$, and by The Chineese Reminder Theorem we may pack all $B_p$ with $p\in P_1$ greedily one after another so that the intersection of intervals spanned by the embedded copies of $B_p$ and $B_q$ for any two consecutive $p<q$ in $P_1$ has length at least $(1-o_k(1))\min\{pq,pn/2\}$. Consequently, we may pack $B_p$ with $p\in P_1$ in an interval $J_1$ of length
		\begin{equation}\label{J1}
		(1+o_k(1))\left(\left(\sum_{p\in P_1\cap [n/2]} pn - \sum_{p\in P_1\cap[n/2]}p^2\right)
		+\frac{1}{2}\sum_{p\in(n/2,k]\cap\mathbb{P}}np\right).
		\end{equation}

		Let us now fix any $i\in[2, \ell]$. We will show that  
		we may pack all sets $B_d$ with $d\in\bigcup_{p\in P_i} D_p^{(k)}$ in an interval $J_i$ of length 
			\begin{equation}\label{Ji}
			(1+o_k(1))\left(\sum_{p\in P_i} ipn - \sum_{p\in P_i}ip^2\right).
			\end{equation}
			To that end we will first, similarly as in the case of $i=1$, pack all $B_{ip}$ with $p\in P_i$ in an interval of the desired length -- note this way we will handle only the sets $B_d$ with the largest diameters for the given $i$. We will thus have to be careful 
			in order to be later able to
complete the packing -- 
by disjointly embedding 
the remaining sets of our interest within the same interval.

			Consider any two consecutive primes $p<q$ in $P_i$. Since integers $ip$ and $q$ are coprime, we may pack $B_{ip}$ and $B_{iq}$ in an interval $I=[a,b]$ of length at most $np+nq-(1-o_k(1))ipq$ with
			a shifted copy $C_p=\{a=a_1,a_2,\ldots,a_n\}$ of $B_{ip}$ disjoint with a shifted copy $C_q=\{b_1,b_2,\ldots,b_n=b\}$ of $B_{iq}$ so that there
			is a set $R_q\subseteq \{0,1,\ldots,q-1\}$ of $i$ distinct residues
			such that $a_j\not\equiv r\pmod q$ for every $a_j\geqslant b_1$ and each $r\in R_q$.
(The argument follows almost the same lines as the one used in Step~3 of the proof of Theorem~\ref{t11} to provide a similar overlapping packing, but with $m=(pi-i^2)q$ and residues $r=(-j'pi)\pmod q$ for $j'\in[i]$; we omit further details.) By an additional slight shift, if necessary, we may also assume that 
\begin{equation}\label{b1an}
b_1-a_n<ipq-2p\leqslant (k-2)p\leqslant (n-2)p.  
\end{equation}
Note we may further glue 
such packings for all pairs of consecutive primes in $P_i$  (by identifying copies of the same progressions), thus providing a packing of all $B_{ip}$, $p\in P_i$, with two-sided overlappings (of lengths $(1-o_k(1))ipq$ for any two consecutive primes $p,q$ in $P_i$), in an interval $J_i$ of the desired length, bounded above  by~(\ref{Ji}).

			It remains to complete the packing by embedding the remaining sets $B_d$ with $d\in\bigcup_{p\in P_i} D_p^{(k)}$ in $J_i$.
			For every $q\in P_i$ we proceed as follows. 
			Adapting the notation above, suppose $C_q=\{b_1,b_2,\ldots,b_n=b\}$ is the embedding of $B_{iq}$ in $J_i$ (one may think of shifting entire $J_i$ so that this holds).
			We then embed $B_q,B_{2q},\ldots,B_{(i-1)q}$ in $J_i$ by positioning a
			copy of each $B_{mq}$ so that it starts at 
			a point $c_m\in (b_1,b_1+q)$ 
			which is equivalent modulo $q$ to a different $r_m\in R_q$, provided above, i.e., $c_m\equiv r_m \pmod q$ and $r_m\neq r_{m'}$ for $m\neq m'$.
			This way the resulting embeddings of all $B_{mq}$ with $m\in [i]$ in $J_i$ are obviously disjoint. Moreover, by the choice of $R_q$, these are also disjoint with the embedding of $B_{ip}$ for $p$ directly preceding $q$ in $P_i$. Note additionally that for every $m\in[i-1]$, the last element of the embedding of $B_{mq}$ is smaller than $b_1+q+(n-1)(i-1)q=b_n-(n-2)q$, 
			and hence the condition~(\ref{b1an}) guarantees that the embedding of $B_{mq}$ will be disjoint with embeddings of all remaining sets $B_d$ with $d\in\bigcup_{p\in P_i} D_p^{(k)}$ in $J_i$.

			Consequently, for $k\leqslant n/2$, by summing up~(\ref{J1}) and~(\ref{Ji}) for $i\geqslant 2$, we conclude that all sets $B_d$ with $d\leqslant k$ can be packed in an interval of length
		$$(1+o_k(1))\left(\sum_{i\in [\ell]}\sum_{p\in P_i}ipn -\sum_{i\in [\ell]}\sum_{p\in P_i}ip^2\right),$$ 
		whereas by Lemma~\ref{l21} (where by $f\sim g$ we mean $f=(1+o_k(1))g$),
		$$
		\sum_{i\in [\ell]}\sum_{p\in P_i}ipn 
		\sim n\sum_{i\in [\ell]} i\left(\frac{\left(\frac{k}{i}\right)^2}{2\ln k}-\frac{\left(\frac{k}{i+1}\right)^2}{2\ln k}\right)
		\sim \frac{nk^2}{2\ln k} \sum_{i\in [\ell]}\frac{1}{i^2}
		\sim \frac{nk^2}{2\ln k} \cdot \zeta(2)
		$$
		and
		$$
		\sum_{i\in [\ell]}\sum_{p\in P_i}ip^2 
		\sim \sum_{i\in [\ell]} i\left(\frac{\left(\frac{k}{i}\right)^3}{3\ln k}-\frac{\left(\frac{k}{i+1}\right)^3}{3\ln k}\right)
		\sim \frac{k^3}{3\ln k} \sum_{i\in [\ell]}\frac{1}{i^3}
		\sim \frac{k^3}{3\ln k} \cdot \zeta(3).
		$$
		Finally, for $k>n/2$, we may bound the length of the shortest interval admitting packing of all progressions $B_d$ with $d\leqslant k$ by the same estimate as for $k\leqslant n/2$ above 
		plus the following, which accounts for alteration of the formula in~(\ref{J1}) for primes $p>n/2$: 
		\begin{eqnarray}
			&&(1+o(1))\left(\frac{1}{2}\sum_{p\in(n/2,k]\cap\mathbb{P}}np-\sum_{p\in(n/2,k]\cap\mathbb{P}}(pn-p^2)\right) \nonumber\\
			&=& (1+o(1))\left(\sum_{p\in(n/2,k]\cap\mathbb{P}}p^2-\frac{n}{2}\cdot \sum_{p\in(n/2,k]\cap\mathbb{P}}p\right) \nonumber\\
			&=& (1+o(1))\left(\left(\frac{k^3}{3\ln k}-\frac{\left(\frac{n}{2}\right)^3}{3\ln k}\right)
			-\frac{n}{2}\cdot \left(\frac{k^2}{2\ln k}-\frac{\left(\frac{n}{2}\right)^2}{2\ln k}\right)\right) \nonumber\\
			&=& (1+o(1))\left(\frac{1}{3}\cdot\frac{k^3}{\ln k}
			+\frac{1}{48}\cdot\frac{n^3}{\ln k}
			-\frac{1}{4}\cdot\frac{nk^2}{\ln k}\right). \nonumber
		\end{eqnarray}	
	\end{proof}
	
	It seems that one of the key steps towards providing optimal multiplicative constants in Theorems~\ref{t11}, \ref{t31} and~\ref{t41} above should 
	rely on 
	understanding and designing appropriate packings of arithmetic progressions with prime differences.

\section{Bound for $M_k(n)$ when $k\rightarrow \infty$; Proof of Theorem \ref{t13}}
	
	In this section we prove Theorem~\ref{t13}.  To do so we first pack almost all arithmetic progressions from the collection $\{B_1,B_2,\ldots, B_k\}$ into a \emph{cyclic} interval of 
size $nk$, which will give us a packing of these progressions into $[m]$, with $m\leqslant 2nk$. The rest of the progressions can be packed in another interval of length $nk$ (in fact, even a 
slightly shorter interval of length $nk-k+o(n^2 k)$), which jointly give the result as stated in Theorem~\ref{t13}.

We will use the known result about nearly perfect matchings in nearly-regular uniform hypergraphs with small co-degrees. The required statement, proved  in \cite{FR} and \cite{PS} following the initial
ideas of R\"odl \cite{Ro}, is stated in the following lemma. Recall that the maximum \emph{co-degree} of a hypergraph is the maximum number of edges containing a fixed pair of vertices.
\begin{lemma}[\cite{FR}, \cite{PS}]
\label{lfr}
For every fixed $t$ and $\mu>0$ there is a $D_0$ and $\delta>0$ such that any $t$-uniform hypergraph on $N$ vertices in which the degree of every vertex is between
$(1-\delta)D$ and $(1+\delta)D$, where $D \geqslant D_0$, and the maximum co-degree is at most $\delta D$, contains a matching covering all but at most $\mu N$ vertices.
\end{lemma}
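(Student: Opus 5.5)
Although the paper quotes this as a known result, I would prove it with the semi-random ``nibble'' method of R\"odl. The matching is built over many rounds; each round takes a small random bite of edges, keeps the essentially disjoint ones, and deletes the vertices they cover. Fix $t$ and $\mu$, and choose a small $\varepsilon>0$ depending only on $t,\mu$. In one round, applied to a hypergraph $H$ whose degrees lie in $(1\pm\delta)D$ and whose co-degrees are at most $\delta D$, I do the following.
\begin{enumerate}
\item Select each edge independently with probability $\varepsilon/D$.
\item Discard every selected edge that meets another selected edge.
\item Add the surviving edges to the matching.
\item Delete all vertices covered by selected edges, and pass to the induced hypergraph $H'$ on the remaining vertex set $V'$.
\end{enumerate}

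First I compute expectations. A vertex $v$ lies in about $D$ edges, so it is uncovered with probability about $e^{-\varepsilon}$. Because co-degrees are small, the events ``$u$ is uncovered'' for the different vertices $u$ of a fixed edge $e\ni v$ are nearly independent: the edges through two vertices of $e$ overlap in only $\delta D$ places. So, conditioned on $v$ surviving, an edge $e\ni v$ survives in $H'$ with probability about $e^{-\varepsilon(t-1)}$. Hence $\mathbb{E}\,\deg_{H'}(v)\approx D'=De^{-\varepsilon(t-1)}$ and $\mathbb{E}|V'|\approx Ne^{-\varepsilon}$. Co-degrees can only drop, so $\operatorname{codeg}_{H'}\le \delta D=\delta e^{\varepsilon(t-1)}D'$. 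The relative co-degree error therefore grows by a constant factor per round, which is harmless over a bounded number of rounds. Also, only an $O(\varepsilon^2)$ fraction of selected edges are discarded for collisions. So almost every deleted vertex is genuinely covered by the matching, and the loss per round is $O(\varepsilon^2)$ times the number of vertices removed.

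Next I need concentration: for all but a negligible set of vertices, $\deg_{H'}(v)=(1\pm\delta')D'$ with $\delta'=\delta+O(\text{small})$. The degree of $v$ in $H'$ depends on the selection status of edges within distance two of $v$, and each single edge changes it by at most a co-degree-type amount. I would therefore apply Azuma's or Talagrand's inequality, or second-moment estimates using the small co-degree, to get deviations $o(D)$ with probability $1-o(1)$ per vertex. Vertices whose degree is atypical can be thrown away: there are few of them in expectation, and Markov's inequality controls their number. Removing them changes other degrees only slightly, since co-degrees are small.

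Finally I iterate for $s$ rounds, with $s$ fixed depending on $\varepsilon$ and $\mu$ so that $e^{-\varepsilon s}<\mu/2$. After $s$ rounds the uncovered set has size about $Ne^{-\varepsilon s}$, plus collision losses $O(\varepsilon)N$ and discarded atypical vertices. Choosing $\varepsilon$ small and then $\delta$ small and $D_0$ large makes the total below $\mu N$. The main obstacle is this error bookkeeping: the degree deviation $\delta_j$ and the co-degree ratio must stay small through all $s$ rounds. I would handle it by setting $\delta_{j+1}=C(\delta_j+\varepsilon^2)$ and starting from a $\delta$ small enough that $C^s(\delta+s\varepsilon^2)$ still stays small. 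This works because $s$ depends only on $\varepsilon$ and $\mu$, not on $D$, while the degrees remain at least $De^{-\varepsilon(t-1)s}\ge D_0e^{-\varepsilon(t-1)s}$, which is large enough for the concentration step.
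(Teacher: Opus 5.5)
The paper does not prove this lemma. It quotes it from Frankl--R\"odl and Pippenger--Spencer, whose proofs use R\"odl's nibble, so your overall strategy is the standard one. There is, however, a genuine gap in how you keep the hypergraph regular from round to round.

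Your one-round analysis assumes that \emph{every} vertex has degree $(1\pm\delta_j)D_j$. With co-degrees as large as $\delta D$ for a fixed $\delta$, this cannot be expected after a nibble, and your claim of deviations $o(D)$ with probability $1-o(1)$ is false. For instance, suppose the edges at $v$ form $1/\delta$ bundles of $\delta D$ edges, the $i$th bundle sharing a second vertex $u_i$. Then $\deg_{H'}(v)$ is essentially $\delta D e^{-\varepsilon}$ times a $\mathrm{Bin}(1/\delta,e^{-\varepsilon})$ variable. It therefore deviates from $D'$ by $\Theta(\sqrt{\varepsilon\delta})D$ with probability bounded away from $0$, however large $D$ is.

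Bounded-difference inequalities such as Azuma's give nothing here, since $\Theta(D^2)$ edges influence $\deg_{H'}(v)$. What works is Chebyshev: bound the covariances of the survival indicators of pairs of edges through $v$ using the co-degree bound. This gives $|\deg_{H'}(v)-D'|\leqslant \eta D'$ except with probability $O_{t,\varepsilon}((\delta+1/D)/\eta^2)$.

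Hence each round leaves, in general, a small but positive fraction of atypical vertices. Your remedy of deleting them rests on the claim that this ``changes other degrees only slightly, since co-degrees are small''. That is only true on average. The co-degree bound limits the effect of \emph{one} deleted vertex on $\deg(w)$, but $w$ may have many deleted neighbours. Markov's inequality shows that most $w$ lose few edges. The others become atypical, deleting them causes further losses, and nothing in your argument stops this cascade.

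The standard cure is to weaken the invariant:
\begin{itemize}
\item require only that all but $\gamma_j|V_j|$ vertices have degree $(1\pm\delta_j)D_j$;
\item require that all vertices have degree at most $K_jD_j$, which is automatic since degrees only drop, with $K_{j+1}=K_je^{\varepsilon(t-1)}$.
\end{itemize}
Keep the atypical vertices, and run the one-round analysis for vertices that are typical and lie in at most $\sqrt{\gamma_j}\,D_j$ edges meeting atypical vertices. By Markov, all but $tK_j\sqrt{\gamma_j}\,|V_j|$ vertices are of this kind.

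Your error bookkeeping also does not close as written. Since $s\geqslant \varepsilon^{-1}\ln(2/\mu)$, for any constant $C>1$ the term $C^s s\varepsilon^2$ is enormous when $\varepsilon$ is small. It does not involve $\delta$, so choosing $\delta$ small cannot help.

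What rescues the iteration is that the degree error has no additive $\varepsilon^2$ term at all. Collisions do not affect $H'$, because you delete every vertex of every selected edge. The exact formula is $\mathbb{E}[\deg_{H'}(v)\mid v\in V']=\sum_{e\ni v}(1-\varepsilon/D)^{n_e}$, where $n_e=(t-1)D\pm t^2\delta D$ counts the edges avoiding $v$ but meeting $e$. It shows the error is amplified only by a factor $1+O(t^2\varepsilon)$ per round, i.e.\ by $e^{O(t^2\varepsilon s)}=O_{t,\mu}(1)$ in total. All additive errors (co-degree terms, the deviation $\eta_j$, atypical neighbours) can be made as small as desired by taking $\delta$ small. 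One then fixes the tolerances needed in round $s$ and chooses $\eta_j,\gamma_j,\delta_j$ backwards. This is legitimate because $s$ depends only on $\varepsilon$ and $\mu$.

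There is also a minor slip. A selected edge meets another selected edge with probability about $1-e^{-t\varepsilon}$, so a $\Theta(t\varepsilon)$ fraction of selected edges is discarded, not an $O(\varepsilon^2)$ fraction. The loss in round $j$ is $O(t\varepsilon^2)|V_j|$ and the total is $O(t\varepsilon)N$. That is the figure you use at the end, so this one is harmless.
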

	
In the result below a \emph{cycle} of \emph{length} $r$ is simply the cyclic group $\mathbb{Z}_r$. Let $\mathcal{G}_n(k)=\{B_1,B_2,\ldots,B_k\}$ be the family of arithmetic progressions $B_d=\{d,2d,\ldots,nd\}$, with $d\in [k]$.
	
\begin{theorem}\label{t51}
Let $n$ be a fixed positive integer. For every $\epsilon>0$ there exists $k_0$ such that for every $k\geqslant k_0$, there is a set of at least $(1-\epsilon)k$ distinct arithmetic progressions from 
the family $\mathcal{G}_n(k)$ that can be packed into a cycle of length $nk$.
\end{theorem}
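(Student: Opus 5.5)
We apply Lemma~\ref{lfr} to an auxiliary hypergraph whose near-perfect matchings are exactly packings into the cycle $\mathbb{Z}_{nk}$. Its vertex set is $V=[k]\cup \mathbb{Z}_{nk}$, so $N=k+nk=(n+1)k$. For every $d\in[k]$ and every shift $s\in\mathbb{Z}_{nk}$ we add the edge
$$e(d,s)=\{d\}\cup\{s+d,s+2d,\ldots,s+nd\}\subseteq V,$$
with elements of the cycle taken modulo $nk$. This edge is a genuine $(n+1)$-set exactly when the $n$ residues $s+jd$ are distinct modulo $nk$. That holds whenever $\gcd(d,nk)$ is small relative to $k$. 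To be safe we restrict to $d\in(\epsilon' k,k]$, or simply discard the few bad $d$: for $d\le k$, a repetition $jd\equiv j'd \pmod{nk}$ with $0<|j-j'|<n$ forces $nk\mid (j-j')d$, which is impossible since $|j-j'|d<nk$. So every edge has exactly $n+1$ elements and the hypergraph is $t$-uniform with $t=n+1$, a fixed number.

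Next we compute degrees. A vertex $d\in[k]$ lies in exactly $nk$ edges, one per shift. A vertex $x\in\mathbb{Z}_{nk}$ lies in $e(d,s)$ iff $x=s+jd$ for some $j\in[n]$. For each $d$ there are exactly $n$ such shifts, since each $j$ gives one $s$ and distinct $j$ give distinct $s$. Hence $\deg x=nk$. The hypergraph is therefore exactly $D$-regular with $D=nk\to\infty$, and the degree condition holds for any $\delta$.

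Then we bound co-degrees. There are three kinds of pairs.
\begin{itemize}
\item A pair $\{d,d'\}$ with $d\ne d'$ lies in no edge.
\item A pair $\{d,x\}$ lies in at most $n$ edges.
\item A pair $\{x,y\}$ of cycle elements lies in $e(d,s)$ only if $y-x\equiv (j'-j)d\pmod{nk}$ for some $0<|j'-j|<n$. For each of the fewer than $2n$ values of $j'-j$, the number of $d\in[k]$ satisfying this congruence is at most $\gcd(j'-j,nk)\le n$. Each such pair $(d,\{j,j'\})$ determines $s$, and there are at most $n$ choices of $j$.
\end{itemize}
So every co-degree is $O(n^3)$, a constant. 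This is at most $\delta nk$ once $k\ge k_0$. Checking this co-degree bound carefully, in particular counting solutions of $(j'-j)d\equiv c \pmod{nk}$, is the main technical step, but it only needs a constant bound independent of $k$.

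Finally we extract the packing. Apply Lemma~\ref{lfr} with $t=n+1$ and $\mu=\epsilon/(n+1)$. This gives a matching missing at most $\mu N=\epsilon k$ vertices. A matching uses each $d\in[k]$ at most once, and its edges have pairwise disjoint cycle parts. Every $d$ not left uncovered corresponds to a shifted copy of $B_d$ in $\mathbb{Z}_{nk}$, and these copies are pairwise disjoint. At most $\epsilon k$ of the vertices in $[k]$ are uncovered, so at least $(1-\epsilon)k$ distinct progressions are packed into the cycle of length $nk$, as required.
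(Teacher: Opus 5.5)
Your proposal is correct and follows the paper's proof essentially verbatim. It uses the same $(n+1)$-uniform hypergraph on $[k]\cup\mathbb{Z}_{nk}$ with edges $\{d\}\cup(s+B_d)$, which is $nk$-regular with co-degrees bounded by a constant depending only on $n$, and feeds it into Lemma~\ref{lfr} with $\mu=\epsilon/(n+1)$ so that $\mu N=\epsilon k$. You also check the uniformity and the co-degree bound, which the paper only asserts. One point that neither you nor the paper spells out: $B_k$ is the full subgroup $k\mathbb{Z}_{nk}$, so the edges $e(k,s)$ repeat with period $k$ in $s$. You should therefore either read the hypergraph as a multi-hypergraph or discard $d=k$, which lowers the degrees of the cycle vertices by only $n$.
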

\begin{proof} 
	Put $t=n+1$, $\mu=\epsilon/(n+1)$, $N=nk+k$, and consider the following $t$-uniform hypergraph on the set of $N$ vertices $[k] \cup \mathbb{Z}_{nk}$. For each  $d \in [k]$ there are $kn$ edges, where the first vertex of each of these edges is $d \in [k]$ and the other $n$ vertices are a cyclic shift of the progression $B_d$ in $\mathbb{Z}_{nk}$. Note that this hypergraph is $D=kn$-regular. It is also easy to check that the maximum co-degree in it is smaller than $n^3$. As $n$ (and hence $t$) are fixed, if $k$ is sufficiently large, then by Lemma \ref{lfr} the hypergraph contains a matching covering all vertices besides at most $\epsilon k
(=\mu N)$ of them. This gives the required progressions.
\end{proof}	

The assertion of Theorem \ref{t13} follows quickly from the last result. As mentioned above, the cyclic packing gives a packing of almost all progressions into $[m]=[2nk]$. The rest of the progressions can be packed in another interval of length $nk-k+o(n^2 k)$ using Lemma \ref{l23}.

\section{Concluding remarks and open problems}

\subsection{Error correcting codes and Ferres shapes}
The packing problem considered here is similar in spirit to many packing problems in discrete mathematics. A prominent example is the main combinatorial problem in Coding Theory. 
Here the objective is to pack shifts of $m$ \emph{Hamming balls} of binary vectors of a given 
length $n$ and radius $r$ in the Hamming cube $\{0,1\}^n$, where the shifts are performed in the group  $\mathbb{Z}_2^n$. The existence of such a packing with no overlap is equivalent to the existence of a binary \emph{error correcting code} with $m$ codewords, length $n$, and distance at least
$2r+1$.  See, e.g., \cite{MS} and the references therein for the many relevant known results and open problems.

Another example of a problem of this type is discussed in \cite{ABS}. The main result there
is that the maximum fraction of all the $p(n)$ \emph{Ferrers shapes} corresponding to partitions of an integer $n$ that can be packed with no overlap in a rectangle of length $n \times p(n)$ is $O(1/\log n)$. Note that here the shifts are two-dimensional. This can be used to show
that the smallest length $t=t(n)$  of an $n$ by $t$ rectangle in which shifts of all
the $p(n)$ shapes can be packed with no overlap is at least $\Omega(p(n) \log n)$. Indeed, we can cover any $n$ by $cp(n) \log n$ rectangle by, say, $c \log n +1$ subrectangles,
each of height $n$ and length smaller than $p(n)$, so that consecutive rectangles have an overlap of a common $n$ by $n$ square. If the original rectangle contains all the Ferrers shapes, then each shape is fully contained in one of the smaller rectangles, which cannot cover a fraction of at least $1/(c \log n+1)$ of all shapes for an appropriately chosen constant $c$.
Therefore $t(n) > c p(n) \log n$ for some absolute constant $c>0$. 

It is not too difficult to show that this is tight up to the absolute constant $c$.  Indeed,
by the Hardy-Ramanujan asymptotic formula for the number of shapes of size
$n$, see, e.g., \cite{An}, 
\begin{equation} 
	\label{e31}
	p(n) =(1+o(1)) \frac{e^{C\sqrt{n}}}{4n\sqrt{3}}, 
\end{equation}
where $C=\pi \sqrt{\frac{2}{3}}$, and the $o(1)$-term tends to $0$ as $n$ tends to infinity. This easily implies that the number of shapes in which the first row $x_1$ or the first column 
$y_1$ is larger than $A \sqrt n \log n$ is less than $p(n)/ n$ for an appropriate  absolute constant $A$. (To see this note that the number of shapes with first row
of length $x_1$ is $p(n-x_1)$ and a similar formula holds for the first column.)
We can thus first omit all these exceptional shapes, pack all the others in an $n$ by $O(p(n) \log n)$ rectangle using the packing along diagonals described in the final remark of \cite{ABS}, and complete the packing by appending one additional $n$ by $n$ square for each of the remaining exceptional shapes.

\subsection{Progressions with prime differences}
Our main result in Theorem~\ref{t11} shows that, somewhat surprisingly, the minimum length of an interval in which all the $n$ progressions $A_d$ with differences $d \in [n]$ can be packed with no overlap
is the same, up to a constant factor, as the length required for all prime differences $p \leqslant \sqrt n$. Moreover, a close look at the proof reveals that it implies the following.
\begin{proposition}
	\label{p31}
	For any $\epsilon>0$ there exists $\delta=\delta(\epsilon)>0$ so that the following holds. Let $F$ be the set of all differences $d\leqslant \sqrt{n}(\ln n)^3$ divisible by a prime $p$, with $\delta \sqrt n \leqslant p \leqslant \sqrt n$. Then,  
	$$
	m_{[n]-F}(n) \leqslant \epsilon \frac{n^{3/2}}{\ln n} (< \epsilon m(n)).
	$$
	Therefore, $(1-\epsilon) m(n) \leqslant m_F(n) (\leqslant m(n)).$
\end{proposition}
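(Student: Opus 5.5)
We retrace the four steps of the upper-bound proof of Theorem~\ref{t11}, now applied to the family indexed by $[n]-F$, with $s=\sqrt n(\ln n)^3$. The point is that every part of that construction except the final one in Step~3 already costs $o(n^{3/2}/\ln n)$. The final part, the primes close to $\sqrt n$, is exactly what $F$ removes.

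\textbf{Reduced construction.} Steps~1 and~2 pack all $d\leqslant s$ that are $(\ln n)^5$-smooth, or that have a prime divisor in $[(\ln n)^5,\sqrt s]$. The resulting interval has length $n^{7/5+o(1)}+O(n^{3/2}/(\ln n)^2)=o(n^{3/2}/\ln n)$. Every other $d\leqslant s$ has a prime divisor $p\geqslant \sqrt s$. If $d\notin F$, this prime satisfies either $\sqrt s\leqslant p<\delta\sqrt n$ or $p>\sqrt n$.
\begin{enumerate}
\item[(a)] Divisors $p>\sqrt n$: the first half of Step~3 packs all such $d=pg$, with $g\leqslant(\ln n)^3$, into length $n(\ln n)^3=o(n^{3/2}/\ln n)$.
\item[(b)] Divisors $\sqrt s\leqslant p<\delta\sqrt n$: each set $D_p^{(s)}$ has at most $s/p\leqslant p$ elements, so it fits into one interval of length $n$, as in Step~3. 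Here we use no overlapping tricks at all. By the Prime Number Theorem there are at most $(1+o(1))\,\delta\sqrt n/\ln(\delta\sqrt n)=(2\delta+o(1))\sqrt n/\ln n$ such primes. The total length is therefore at most $(2\delta+o(1))n^{3/2}/\ln n$.
\end{enumerate}

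\textbf{Large differences.} Step~4 inserts every $A_d$ with $d>s$ into the existing packing via Lemma~\ref{l23}. This needs no extra room, because the total size of all progressions is $O(n\ln n)$ and each inserted progression has size at most $\sqrt n/(\ln n)^3$. The product of sizes in Lemma~\ref{l23} is then $O(n^{3/2}/(\ln n)^2)$. The only point requiring care is that the interval already built has length $g$ with $g-n\gg n^{3/2}/(\ln n)^2$. This holds as long as $\delta$ is fixed, since part (b) alone contributes length of order $\delta n^{3/2}/\ln n$. Otherwise we simply append an extra interval of length $n^{3/2}/(\ln n)^{3/2}$, which is negligible.

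\textbf{Conclusion.} Altogether $m_{[n]-F}(n)\leqslant(2\delta+o(1))n^{3/2}/\ln n$. Choosing $\delta=\epsilon/3$ gives the bound $\epsilon n^{3/2}/\ln n$ for large $n$, and this is less than $\epsilon m(n)$ by the lower bound of Theorem~\ref{t11}. For the second statement, concatenate an optimal packing of $\mathcal F_F$ with one of $\mathcal F_{[n]-F}$. This gives $m(n)\leqslant m_F(n)+m_{[n]-F}(n)\leqslant m_F(n)+\epsilon m(n)$, hence $(1-\epsilon)m(n)\leqslant m_F(n)$. Monotonicity $m_F(n)\leqslant m(n)$ is trivial.

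The only delicate step is the Step~4 insertion, which needs the host interval to be long enough. It is handled as described above, and everything else is bookkeeping.
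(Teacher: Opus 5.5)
Your proof is correct and follows the paper's own argument. Steps 1, 2, 4 and the part of Step 3 with primes $p>\sqrt n$ cost $o(n^{3/2}/\ln n)$, while the primes in $[\sqrt s,\delta\sqrt n)$ cost $n$ times their number, namely $(2\delta+o(1))n^{3/2}/\ln n$; the second claim then follows by concatenating the two packings. Your explicit check that the host interval is still long enough for the Lemma~\ref{l23} insertions in Step 4 is a detail the paper leaves implicit.
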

\begin{proof}
	The total length required for packing the progressions with differences considered in
	steps 1,2, and 4 in the proof of the upper bound of Theorem \ref{t11} is $o(n^{3/2}/\ln n)=o(m(n)).$
	The same holds for differences $d$ which are considered in step 3 and are divisible by a prime $p>\sqrt{n}$.
	By our argument in step 3, the contribution  from  the remaining differences from this step 
	which are
	divisible by a prime smaller than $\delta \sqrt n$ is at most $n$ times the number
	of these primes.
	The desired result thus follows by the Prime Number Theorem.  
\end{proof}
\subsection{Beyond arithmetic progressions with bounded diameter}
	The upper bound in Theorem \ref{t11} applies the arithmetic properties of the progressions, besides the fact that their sizes are $\lfloor  n/d \rfloor$, for $1 \leqslant d \leqslant n$. The sizes themselves (along with the diameter constraint) suffice to imply that the sets can be packed in an interval of length $\Theta(n^{3/2})$, and this is tight up to the hidden constant
in the $\Theta$ notation. This is proved in the following simple statement, in which we do not make a serious attempt to optimize the absolute constants.
\begin{proposition}
	\label{p32}
	Let $C_i$, $1 \leqslant i \leqslant n$, be an arbitrary collection of subsets of $[n]$, where for all admissible $i$, $|C_i|=\lfloor \frac{n}{i} \rfloor$. Then there are shifts $r_i+C_i$ that can be packed in an interval
	of length at most $(1+1/e+o(1))n^{3/2}$. This is tight up to the constant factor, that is, there are such sets $C_i$ that cannot be packed in an interval of length smaller than $(1+o(1))n^{3/2}/2$.
\end{proposition}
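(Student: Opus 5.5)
We prove the upper bound by a greedy packing that splits the sets into ``large'' and ``small'' ones at the threshold $i \approx \sqrt n$, and the lower bound by an explicit family of sets.

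\emph{Upper bound.} The sets $C_i$ with $i\leqslant \sqrt n$ are placed one after another in disjoint consecutive intervals of length $n$. This uses total length $\sqrt n\cdot n = n^{3/2}$. The union $U$ of these copies has
$$|U| \leqslant \sum_{i\leqslant \sqrt n} \frac{n}{i} = (\tfrac12+o(1))\, n\ln n .$$
The remaining sets $C_i$ with $i>\sqrt n$ each have size at most $\sqrt n$. We insert them one by one into the larger interval $I$ of length $g=(1+c)n^{3/2}$, using Lemma~\ref{l23} each time. Let $A$ be the set occupied so far and $B=C_i$. A disjoint copy exists as soon as
$$|A|\cdot|C_i| < g-n .$$
Here $|A|$ is bounded by the total size of all sets, which is $O(n\ln n)$, while $|C_i|\leqslant n/i<\sqrt n$. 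So the condition requires only $g \gtrsim n^{3/2}\ln n$. That is too weak, so the counting must be sharpened.

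The sharper count goes as follows. When $C_i$ is inserted, the number of forbidden shifts is at most $|A|\cdot|C_i|$. Order the sets by decreasing size and place each $C_i$ before the smaller ones. Then the occupied set $A$ at the moment $C_i$ is inserted has size at most
$$n\sum_{j<i}\frac1j \approx n\ln i ,$$
which gives the condition $n\ln i\cdot \frac{n}{i} < g$. For $i$ beyond about $\sqrt n\ln n$ this is automatic when $g\asymp n^{3/2}$.

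The middle range $\sqrt n<i\leqslant \sqrt n\,\lambda$ needs care, and this is where the constant $1/e$ should come from. I would change the threshold and place the sets with $i\leqslant T$ in separate blocks, costing $Tn$. For $i>T$ the condition is roughly $n^2 \ln i / i \lesssim g$. Optimizing $T$ against $g$ should yield $g=(1+1/e+o(1))n^{3/2}$. For example, take $T=\sqrt n$ and put the sets with $i$ in $(\sqrt n, e\sqrt n]$ into extra space $\approx n^{3/2}/e$, after checking the exact bookkeeping. This optimization of the constant is the main obstacle. If it fails, I would still get $O(n^{3/2})$ and then refine the constant.

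\emph{Lower bound.} Choose $C_i$ to be an interval of $\lfloor n/i\rfloor$ consecutive elements for $i\leqslant \sqrt n$. For $i\leqslant \sqrt n$ I would instead make $C_i$ contain both $1$ and $n$, so that it spans $[1,n]$; for instance $C_i$ is an interval of length $\lfloor n/i\rfloor-1$ together with the point $n$. The obstruction I want is that two such sets cannot interleave much. Concretely, take $C_i=\{1,\dots,\lfloor n/i\rfloor\}$, which has no internal gaps, so it needs $\lfloor n/i\rfloor$ consecutive free cells. With such sets alone the total size is only $n\ln n$, so that construction gives no bound.

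Better: take $C_i=\{n/i, 2n/i,\dots\}$-like spread sets. Requiring each $C_i$ for $i\leqslant\sqrt n$ to have diameter close to $n$ and all gaps less than $\sqrt n$ should force the shifts of any two of them to differ by at least about $n-$(overlap). Summing over the $\sqrt n$ large sets then gives a required length of about $n^{3/2}/2$. I would formalize this as follows: for $i\leqslant\sqrt n$, let $C_i$ consist of the initial block $[1,\lfloor n/i\rfloor-1]$ plus the point $n$. Any two such copies can overlap only in a window of length at most $\lfloor n/i\rfloor$. Ordering the shifts and summing the consecutive differences gives at least
$$\sum_{i\leqslant\sqrt n}\Bigl(n-\frac{n}{i}\Bigr)-\text{lower order}=(1-o(1))n^{3/2}.$$
This is even stronger than $n^{3/2}/2$, so the lower bound in the statement should follow from it, up to checking how the overlaps can interact.
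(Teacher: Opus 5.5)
Both halves of your proposal have genuine gaps.

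\textbf{Upper bound.} You insert the sets $C_i$, $i>\sqrt n$, into the same interval $I$ that already holds the $\sqrt n$ blocks. So your count $|A|\approx n\ln i$ includes the $(\frac12+o(1))n\ln n$ cells of the large sets, and the condition of Lemma~\ref{l23} becomes $n^2\ln i/i\lesssim g$. This fails for $g=O(n^{3/2})$ throughout $\sqrt n<i=o(\sqrt n\ln n)$. Within your scheme, balancing the threshold $T$ against the block cost $Tn$ gives only $g=\Theta(n^{3/2}\sqrt{\ln n})$, so even your fallback $O(n^{3/2})$ is not justified.

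The missing idea is to put \emph{every} $C_x$ with $x>\sqrt n$ into a fresh interval of length $bn^{3/2}$, disjoint from the blocks. When $C_x$ is inserted, the occupied part of that interval has size at most $\sum_{\sqrt n<j<x}n/j=(1+o(1))\,n\ln(x/\sqrt n)$. Lemma~\ref{l23} then only needs $(1+o(1))\,n\ln(x/\sqrt n)\cdot\frac{n}{x}<bn^{3/2}-n$. Since $\ln(x/\sqrt n)/x$ is maximized over $x>\sqrt n$ at $x=e\sqrt n$ with value $1/(e\sqrt n)$, any fixed $b>1/e$ works for all $x$ at once. This maximization is where $1/e$ comes from. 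Your ``extra space $\approx n^{3/2}/e$'' for $i\in(\sqrt n,e\sqrt n]$ becomes correct only if the sets with $i>e\sqrt n$ also go into that extra space and are counted the same way.

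\textbf{Lower bound.} Your construction fails. Take $C_i=[1,\lfloor n/i\rfloor-1]\cup\{n\}$ with $i,j\geqslant 2$, and shift $C_j$ by $\lfloor n/i\rfloor-1$. Its block lands inside the gap $[\lfloor n/i\rfloor,n-1]$ of $C_i$ and its last point lies beyond $n$. So the copies are disjoint while their spans share about $n-n/i\geqslant n/2$ elements. In fact the sets $C_i$, $i\leqslant\sqrt n$, of your family pack into length $O(n\ln n)$.

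The claimed $(1-o(1))n^{3/2}$ was a warning sign. To force apart the spans of two subsets of $[n]$ containing $1$ and $n$, every nonzero $t\in(-n,n)$ must lie in $C_i-C_j$. That needs $|C_i|\,|C_j|\geqslant 2n-2$, i.e.\ roughly $ij\leqslant n/2$. So disjoint spans can be forced for at most about $\sqrt{n/2}$ of the sets.

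The paper instead takes, for $i\leqslant\sqrt n/2$ (with $\sqrt n$ an integer), sets $C_i$ containing both $[\sqrt n]$ and $\{\sqrt n,2\sqrt n,\ldots,n\}$; this is possible since $|C_i|\geqslant 2\sqrt n$. For $1\leqslant t\leqslant n-1$, the run $t+[\sqrt n]$ contains a multiple of $\sqrt n$ in $[\sqrt n,n]$ when $t\leqslant n-\sqrt n$, and contains $n$ otherwise. Hence two such copies meet unless their length-$n$ spans are disjoint, which gives length at least $n\cdot\sqrt n/2$. Your heuristic of diameter $n$ with gaps below $\sqrt n$ points this way, but it also needs a run of $\sqrt n$ consecutive elements. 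The resulting $\approx 2\sqrt n$ elements per set are why only $i\leqslant\sqrt n/2$ qualify.
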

\begin{proof}
	The first $\sqrt n$ sets $C_i$ can clearly be packed in an interval of length $n^{3/2}$ by shifting each $C_i$ to a separate interval of length $n$. Starting  then with an empty new interval of length 
	$bn^{3/2}$ we pack shifts of the sets $C_i$ for $i > \sqrt n$ one by one, in order, using Lemma \ref{l23}. When trying to embed the set $C_x$, the total size of the sets embedded so far (in this new interval) is
	at most 
	$$
	\sum_{\sqrt n < i<x} \frac{n}{i} =
	(1+o(1)) n (\ln x - \ln {\sqrt n}).
	$$
	(We assume here that, say, $x \geqslant 1.1 \sqrt n $, it is easy to see that there is no problem to embed the earlier sets.)
	
	By Lemma \ref{l23} the embedding can be performed provided for every relevant $x$,
	\begin{equation}
		\label{e31}
		(1+o(1)) n (\ln x - \ln {\sqrt n}) \cdot \frac{n}{x} < b n^{3/2}-n.
	\end{equation}
	The function $f(x)=\frac{\ln x -\ln \sqrt n}{x}$ attains its maximum for $x > \sqrt n$ at $x=e \sqrt n$, where its value is $1/(e \sqrt n)$. Therefore, the inequality in (\ref{e31}) holds for any fixed $b>1/e$ (and sufficiently large $n$). This establishes the first part of the proposition.
	
	To see that it is tight up to a constant factor consider an example in which each of the first $\sqrt n/2$ sets $C_i$ contains the interval	$[\sqrt n]=\{1,2, \ldots ,\sqrt n\}$ as well as the arithmetic progression $\{\sqrt n, 2 \sqrt n, 3 \sqrt n, \ldots,n\}$ (assume, for simplicity, that $\sqrt n $ is an integer.) Then in any disjoint packing of shifts of these $\sqrt n/2$ sets $C_i$, no two intervals of length $n$ containing the shifted $C_i$ can have a nonempty overlap. This completes the proof of the proposition.
\end{proof}
The constants in the upper bound and in the lower bound in the last proposition can be improved by being a bit more careful. In the upper bound the constant can be improved to $\frac{2}{\sqrt e}+o(1)$. This is done
by embedding each of the first $\lfloor \frac{\sqrt n}{\sqrt e} \rfloor$ sets $C_i$ in a separate interval of length $n$, and then by embedding the remaining sets $C_i$, in order, one by one, using Lemma \ref{l23}, in another interval of length $(\frac{1}{\sqrt e}+o(1))n^{3/2}$. The constant in the lower bound can be improved to $\ln 2+o(1)$. To do so, add to the initial sets $C_i$ for $i \leqslant \sqrt n/2$ described above additional
sets $C_j$ for $\sqrt n/2 <j \leqslant \sqrt n$, where for each such $j$, $C_j$ contains the interval
$[\sqrt n]=\{1,2, \ldots ,\sqrt n\}$ together with the arithmetic progression $\{\sqrt n, 2 \sqrt n, 3 \sqrt n, \ldots, (n/j-\sqrt n) \sqrt n\}$.

\subsection{Conjectures}
	It may be interesting to find an asymptotic formula for $m(n)$ and determine a constant $b$ so that 
	$m(n)=(1+o(1))b \frac{n^{3/2}}{\ln n}$. This seems to require additional ideas. The same remark applies to other functions we considered. Below we state a list of conjectures regarding this issue.
	
	\begin{conjecture}\label{c61}
	$m(n)\sim \frac{4}{3}\cdot \frac{n^{3/2}}{\ln n}$.
	\end{conjecture}
	We also suspect $1/6$ should approximate the multiplicative factor for $M_n(n)$, cf. Theorem~\ref{t41}.
	\begin{conjecture}\label{c62}
		$M(n)\sim \frac{1}{6}\cdot \frac{n^{3}}{\ln n}$.
	\end{conjecture}
	Our apparently much weaker conjectures concern the set of prime numbers exclusively, whose understanding seems to be one of the key objectives towards solving more general settings.
	Denote by $\mathbb{P}(x)$ the set of prime numbers not exceeding $x$.
	\begin{conjecture}\label{c63}
		$m_{\mathbb{P}(\sqrt{n})}(n)\sim \frac{4}{3}\cdot \frac{n^{3/2}}{\ln n}$.
	\end{conjecture}
	\begin{conjecture}\label{c64}
		$M_{\mathbb{P}(n)}(n)\sim \frac{1}{6}\cdot \frac{n^{3}}{\ln n}$.
	\end{conjecture}
	The two problems above, in Conjectures~\ref{c63} and~\ref{c64}, seem to be closely related.
	Note also that the lower bounds in these follows from the proofs of the lower bounds in Theorem~\ref{t11} and Theorem~\ref{t41}, respectively.
	
	As Conjecture~\ref{c61}, if true, resolves asymptotically also the issue of $m_k(n)$ for $k\geqslant\sqrt{n}$, in view of Theorem~\ref{t31}, the only open case would concern $k<\sqrt{n}$ which are not $o(\sqrt{n})$.
	\begin{conjecture}
		For every $k$ satisfying $\Omega(\sqrt{n})=k<\sqrt{n}$, we have $$m_k(n)\sim \left(1-\frac{k^2}{3n}\right)\frac{k}{\ln k}\cdot n.$$
	\end{conjecture}
	The following seem to be plausible in turn for $M_k(n)$ with $k<n$, in view of Theorem~\ref{t41}.
	\begin{conjecture}
	If $k\ll n$, then $$M_k(n)\sim \frac{k^2}{2\ln k}\cdot n,$$
	while otherwise, $$M_k(n)\sim \frac{k^2}{2\ln k}\cdot n-\frac{k^3}{3\ln k}.$$
	\end{conjecture}

The last conjecture concerns packing of $k$ arithmetic progressions of the form $B_d=\{d,2d,\ldots,nd\}$, for $d=1,2,\ldots,k$, with the length $n$ fixed. We suspect that perhaps the trivial lower bound of $nk$ is asymptotically correct. 
\begin{conjecture}
For every fixed $n$ and $k$ tending to infinity, we have 
$$M_k(n)=(1+o(1))nk.$$
\end{conjecture}

\end{document}